%
%
%
\documentclass[10pt,reqno]{amsart}
\usepackage{hyperref,amssymb,mathrsfs,graphicx,subfigure}
\usepackage{amsfonts,amssymb,amsmath,enumerate} 
\usepackage{graphicx,colortbl,xcolor}
\usepackage{floatrow}
\usepackage{soul}
\usepackage{comment}

\topmargin-0.1in \textwidth5.8in \textheight8.5in \oddsidemargin0in
\evensidemargin0in

\setlength{\parskip}{4pt}

\newtheorem{theorem}{Theorem}[section]
\newtheorem{lemma}{Lemma}[section]

\newtheorem{proposition}{Proposition}[section]

\newtheorem{definition}{Definition}[section]

\DeclareMathOperator*{\essinf}{ess\,inf}
\DeclareMathOperator*{\esssup}{ess\,sup}
\newcommand{\Section}[1]{\section{#1}\setcounter{equation}{0}}

\def\BE{\begin{equation}}
\def\EE{\end{equation}}


\def\MM{\mathsf{M}}

\newcommand{\R}{\mathbb{R}}
\newcommand{\N}{\mathbb{N}}

\newcommand{\eps}{\varepsilon}
\renewcommand{\t}{\tau}

\newcommand{\mm}{\mathtt m} 
\newcommand{\vv}{\mathtt v}

\newcommand{\tv}{\mathrm{TV}\,}

\newcommand{\BV}{$BV$\ }

\newcommand{\DT}{{\Delta t}}

\newcommand\ds{\displaystyle}

\makeatletter
\newcommand{\doublewidetilde}[1]{{%
  \mathpalette\double@widetilde{#1}%
}}
\newcommand{\double@widetilde}[2]{%
  \sbox\z@{$\m@th#1\widetilde{#2}$}%
  \ht\z@=.9\ht\z@
  \widetilde{\box\z@}%
}
\makeatother

\def\charf {\mbox{{\text 1}\kern-.30em {\text l}}}


\begin{document}

\title[]{
Unconditional flocking for Weak solutions to self-organized systems of Euler-type
\\ with all-to-all interaction kernel\\
}

\author[]
{Debora Amadori}
\address{\newline 
Dipartimento di Ingegneria e Scienze dell'Informazione e Matematica (DISIM), University of L'Aquila -- L'Aquila, Italy}
\email{debora.amadori@univaq.it}

\author[]
{Cleopatra Christoforou}
\address{\newline Department of Mathematics and Statistics, University of Cyprus -- Nicosia, Cyprus}
\email{christoforou.cleopatra@ucy.ac.cy}

\begin{abstract}
We consider a hydrodynamic model of flocking-type with all-to-all interaction kernel in one-space dimension and establish that the global entropy weak solutions, constructed in~\cite{AC_2021} to the Cauchy problem for any $BV$ initial data that has finite total mass confined in a bounded interval and initial density uniformly positive therein, admit unconditional time-asymptotic flocking without any further assumptions on the initial data.
In addition, we show that the convergence to a flocking profile occurs exponentially fast.
\end{abstract}
\date{\today}

\subjclass{Primary: 35L65; 35B40; Secondary: 35D30; 35Q70; 35L45} \keywords{Hydrodynamic limit, self-organized dynamics, front tracking, $BV$ weak solutions, global existence, vacuum, time-asymptotic}

\thanks{\textbf{Acknowledgment.} The work of D.A. was partially supported by the Ministry of University and Research (MUR), Italy under the grant PRIN 2020 - Project N. 20204NT8W4, \textit{Nonlinear evolution PDEs, fluid dynamics and transport equations: theoretical foundations and applications} and by the INdAM-GNAMPA Project 2023, CUP E53C22001930001, \textit{Equazioni iperboliche e applicazioni}} 

\maketitle
\centerline{\date}

\tableofcontents

\Section{Introduction}%
The study of hydrodynamic models that emerged in the area of self-organization has received alot of attention in the recent years and many new challenges in partial differential equations have arisen that yield interesting questions in the mathematical community. In this paper, we set up a problem in this context and study solutions in the weak framework that are appropriate to capture asymptotic flocking.
To begin with, we consider the Cauchy problem for the system
\begin{equation}\label{eq:system_Eulerian}
\begin{cases}
\partial_t\rho +  \partial_x (\rho \vv)  = 0, &\\
\partial_t (\rho \vv)  +  \partial_x \left(\rho \vv^2 + p(\rho) \right) = K \displaystyle\int_\R \rho(x,t)\rho(x',t)\left(\vv(x',t)  - \vv(x,t) \right)\,dx'  &
\end{cases} 
\end{equation}
with $(x,t)\in \R\times [0,+\infty)$. Here $\rho\ge 0$ stands for the density, $\vv$  for the velocity, $p$ for the pressure, given by
\begin{equation}\label{gamma=1}
p(\rho)= \alpha^2\rho\,, \qquad \alpha>0\,,
\end{equation}
and $K>0$ is a given constant. Having set $\mm := \rho\vv$ as the momentum variable, let the initial condition be
\begin{equation}\label{eq:init-data}
(\rho,\mm)(x,0)=\left(\rho_0(x), \mm_0(x)\right)\,\qquad x\in\mathbb{R}\;,
\end{equation}
and our aim is to formulate a problem to~\eqref{eq:system_Eulerian}--\eqref{eq:init-data} with conditions appropriate for the models of self-organized systems and then seek weak solutions. The pioneering work of Cucker and Smale~\cite{CuS1} led a major part of the mathematical community to conduct research intensively on this topic. Many mathematical models have arised and most work so far is on the behavior of the particle models, the kinetic equation and the hydrodynamic formulation. However, very little is done in this area on \emph{weak solutions} and the scope of this paper is to contribute in this direction of weak solutions to the an Euler-type flocking system that is derived in a hydrodynamic formulation. We refer the reader to the reviews~\cite{CFTV-2010, Shv2021,Ta2023} and the references therein.

Karper, Mellet and Trivisa in~\cite{KMT15} prove the convergence of weak solutions to the kinetic equation Cucker-Smale flocking model on $(x,t,\omega)\in (0,T)\times\R^d\times\R^d$
\begin{equation}\label{S1eq:hydro}
f_t^\epsilon+\omega\cdot\nabla_x f^\epsilon+\text{div}_{\omega}(f^\epsilon L[f^\epsilon])=\frac1\epsilon\Delta_\omega 
f^\epsilon+\frac1\epsilon \text{div}_\omega(f^\epsilon(\omega-\vv^\epsilon))
\end{equation}
to strong solutions of an Euler-type flocking system of the form~\eqref{eq:system_Eulerian}. Here,
$f^\epsilon\dot=f^\epsilon(x,t,\omega)$ stands for the scalar density of individuals and $L$ the alignment operator 
that is the usual Cucker-Smale operator given by
$$ 
L[f] (x,t,\omega)~\dot =~\int_{\R^d}\int_{\R^d} K(x,y) f(y,w)(w-\omega)dw\,dy
$$
where $K$ is a smooth symmetric kernel and $\epsilon>0$ a small positive parameter. Moreover, on the right hand side of \eqref{S1eq:hydro}, the first term is due to the presence of a stochastic forcing at the particle level, see~\cite{BCC-2011} and the last term in~\eqref{S1eq:hydro} is the strong local alignment interaction with $\vv^\epsilon$ to be the average local velocity. 
This alignment term was derived in~\cite{KMT13} from the Motsch-Tadmor~\cite{MT11} alignment operator (MT) as a singular limit and it is known that the MT operator is an improvement of the standard Cucker-Smale model at small scales. In~\cite{KMT15}, they study the singular limit corresponding to strong noise and strong local alignment, i.e. $\epsilon\to0^+$, and show the convergence
\begin{equation}\label{eq:conv-to-Maxwellian}
    f^\epsilon \to f(x,t,\omega)=\rho(x,t) \exp\left({-\frac{|\omega-\vv(x,t)|^2}{2}}\right)\,,
\end{equation}
while the macroscopic variables $\rho,\,\vv$, which are the $\epsilon\to0^+$ limits of
$$
\rho^\epsilon\dot=\int f^\epsilon d\omega,\qquad \rho^\epsilon \vv^\epsilon\dot=\int f^\epsilon \omega d\omega\;,
$$
satisfy the Euler-type flocking system~\eqref{eq:system_Eulerian} with pressure \eqref{gamma=1}. 
However, in the literature, most hydrodynamic models for flocking so far are described by a pressureless Euler system.
Actually, the pressureless systems arise because the microscopic description of the particles motion, considered in those works, does not contain a stochastic forcing, hence the kinetic equation does not contain the diffusion term and the momentum equation can be closed by the mono-kinetic ansatz that renders it different from~\eqref{eq:conv-to-Maxwellian}. 
It is true that the system with pressure received less attention than the pressureless one and especially in  view of weak solutions.
We refer to~\cite{Choi2019} for a result on smooth, space-periodic solutions to this model with pressure and our previous result~\cite{AC_2021} for~\eqref{eq:system_Eulerian} with $K=1$ on weak solutions and conditional flocking. We will elaborate on the results in~\cite{AC_2021} after setting up the problem.

To set up our problem, we assume that the initial mass $\rho_0$ is confined in a bounded interval, and is uniformly positive in there i.e.  there exist $a_0 < b_0$ such that, for $I_0 ~\dot =~[a_0, b_0]$:
\begin{equation}\label{hyp-init_data}
{\rm supp} \{ {(\rho_0,\mm_0)}\}\subset I_0\,, \qquad \essinf_{I_0} \rho_0 >0\,. 
\end{equation}
This assumption is imposed in order our problem to be suitable model for flocking. For this reason, we will also seek solutions having a bounded support for every $t>0$, that is, there exists two continuous curves $t\mapsto a(t)$, $b(t)$, $t\in[0,+\infty)$ with
\begin{equation}\label{cond-on-a-b}
    a(0)=a_0\,,\quad b(0)=b_0\,;\qquad a(t)<b(t)\qquad  \mbox{ for all }t>0
\end{equation}
such that the support satisfies
\begin{equation}\label{eq:bounded-support}
{\rm supp} \{ {(\rho,\mm)(\cdot,t)}\} 
{ ~=~ }
I(t) ~\dot = ~[a(t),b(t)]\,,\qquad t>0\,. 
\end{equation}
Actually, we require that the two \textit{interfaces} $a(t)$ and $b(t)$ correspond to particle trajectories:
\begin{equation}\label{eq:particle-paths}
    a'(t)=\vv(a(t)+,t)\,,\quad b'(t)=\vv(b(t)-,t) \qquad \mbox{ for a.e. }t>0\,.
\end{equation}
Having these, we set 
\begin{equation}\label{def:Omega}
\Omega= \{ (x,t);\ t> 0\,,\  x\in (a(t),b(t))\}\subset \R\times(0,+\infty)
\end{equation} 
as the non-vacuum region and consider the velocity $\vv=\mm/\rho$ on $\Omega$. Then system \eqref{eq:system_Eulerian} can be considered in the sense of distributions on $\Omega$ and rewrites as
\begin{equation}\label{eq:system_Eulerian-rho-mm}
\begin{cases}
\partial_t\rho +  \partial_x \mm  = 0, &\\[2mm]
\partial_t \mm  +  \partial_x \left(\mm^2/\rho + p(\rho) \right) = K \left[\rho(x,t) \displaystyle\int_\R \mm(x',t) \,dx' -  \mm (x,t) \int_\R \rho(x',t) \,dx'\right]\,.
\end{cases} 
\end{equation}
We aim for a notion of \emph{entropy weak solutions with concentration} that is motivated by the \textit{ad-hoc} boundary condition:
\begin{equation*}
\text{\emph{The vacuum region is connected with the non-vacuum one by a shock discontinuity}\,.} 
\end{equation*}
This motivation has as a target to capture a sharp front with finite speed around the non-vacuum region $\Omega$ that is expected to arise in flocking. In this way, we exclude the case of a rarefaction connecting a vacuum region with a non-vacuum since in such a case, due to the pressure law \eqref{gamma=1}, the front would not have a proper interpretation in terms of flocking because of the unbounded maximal speed, 
having that $\int_0^{\rho} \frac{\sqrt{p'(s)}}{s}\,ds = +\infty$ for $\rho>0$.

The definition of the \emph{entropy weak solution with concentration} is stated in Section~\ref{S2} and in short, we can say that such  solution is entropy weak to~\eqref{eq:system_Eulerian-rho-mm}  in $\Omega$ but delta point masses are present along the interfaces in the momentum equation when the solution is tested on the whole half plane. The point masses correspond to the values of pressure evaluated within the non-vacuum region and render the solution conserving \emph{mass} and \emph{momentum} in the following sense: From the integral identity of~\eqref{eq:system_Eulerian-rho-mm}$_1$, that is
\begin{equation*}
  \iint\left\{\rho\phi_t + \mm\phi_x \right\}\; dxdt=0\;,\qquad \forall\, \phi\in C^\infty_0(\R\times (0,\infty))
   \end{equation*}
and the Rankine-Hugoniot condition $[\mm] = \dot x [\rho]$ that hold along the interfaces $a(t)$ and $b(t)$ thanks to \eqref{eq:particle-paths},
we get immediately that conservation of \emph{total mass} holds true: 
\begin{equation}\label{cons-of-mass}
\int_\R \rho(x,t)\,dx = \int_{I(t)} \rho(x,t)\,dx = \int_\R \rho_0(x)\,dx =: \MM\,,\qquad\forall\, t\ge 0\,. 
\end{equation}
Here $[\cdot]$ denotes as usual the jump of the values. In addition, the concept of \textit{total extended momentum} defined as
\begin{align}\label{def-M-M1-t}
M_1(t) := \int_{I(t)}   \mm(x,t)dx +P_b(t)-P_a(t)\;,
\end{align}
where
\begin{equation}\label{def:Pnu-intro}
\begin{cases}
\ds P_b(t) :=  \int_0^t e^{-{ K}\MM(t-s)}p(\rho(b(s)-,s))\, ds\,,  & \\[2mm]
\ds P_a(t) :=  \int_0^t e^{-{ K}\MM(t-s)} p(\rho(a(s)+,s))\, ds\,;
    \end{cases}
\end{equation}
would be shown to be conserved as well from our notion of solution (see Prop.~\ref{prop:cons-mass-momentum}). 

As it is also discussed in \cite{AC_2021}, the occurrence of a singularity for the solution in the sense that the momentum exhibits concentration at the interfaces $a(t)$ and $b(t)$ as Dirac deltas leads to the definition of \emph{the extended momentum} $\widehat\mm$ as the following distribution:
\begin{equation}\label{def:m-hat}
    \widehat \mm(\cdot,t) := \mm (\cdot,t) + \delta_{b(t)} P_b(t) - \delta_{a(t)} P_a(t)\,,\quad t>0\,,
\end{equation}
where $\delta_{x_0}$ denotes the Dirac delta function at $x_0 \in \R$. This new singularity of the extended momentum $\widehat m$ along the interfaces $a(t)$ and $b(t)$ is known as delta shock and references can be found in~\cite{Dafermosbook}, Chapter 9.

We further use the following standard notation $<\cdot,\cdot>$:
$$
<\widehat\mm(\cdot,t),\phi(\cdot,t)>:=\int_{I(t)} \mm (x,t)\phi(x,t)dx+ P_b(t)\phi(b(t),t) -  P_a(t)\phi(a(t),t),\quad t>0 
$$
as the value of the functional $\widehat \mm$ over $C_0^\infty$, for all test functions $\phi\in C_0^\infty(\R\times\R_+)$. Note here that $\mm=0$ for $x\notin I(t)$.
It is shown in Proposition~\ref{prop:cons-mass-momentum}  that solutions in the sense of Definition~\ref{entropy-sol} satisfy the conservation of total mass \eqref{cons-of-mass} and  total extended momentum:

\begin{equation}\label{cons-of-momentum}
\int_{I(t)} \mm(x,t) \,dx + P_b(t) - P_a(t) = \int_\R \mm_0(x) \,dx=: \MM_1\,,\quad\forall\, t\ge 0  \,,
\end{equation}
and at this point it is evident that concentration terms in the total momentum are the appropriate fit for the set up of this problem.

The aim of this paper is to show \emph{unconditional time asymptotic flocking} for entropy weak solutions with concentration ~\eqref{eq:system_Eulerian}, \eqref{eq:init-data} with \eqref{gamma=1} that is a significant improvement of our previous result in~\cite{AC_2021}, in which a condition  associated with the initial data is imposed.

Before we state the main result, we set the average velocity to be
\begin{equation}\label{def:vbar}
\bar \vv ~\dot =~ \frac{\MM_1}{\MM}\,,
\end{equation}
and provide the definition of \emph{time-asymptotic flocking}.

\begin{definition}\label{Def: flo}
A solution $(\rho,\mm)(x,t)$ to system~\eqref{eq:system_Eulerian} with the structure~\eqref{eq:bounded-support} admits \emph{time-asymptotic flocking} if the following conditions hold true:
\begin{enumerate}
\item[(i)] the length of the support $I(t) = [a(t), b(t)]$ of the solution is bounded uniformly in  time, 
\item[(ii)] the oscillation of the velocity $\vv=\mm/\rho$  on $(a(t), b(t))$ vanishes as $t\to+\infty$, that is
$$\esssup_{x_1,x_2\in (a(t), b(t))} |\vv(x_1,t) - \vv(x_2,t)|~~\xrightarrow{t\to+\infty} ~~ 0\;.
$$
\end{enumerate}
\end{definition}

Here we state the main theorem of this paper.
\begin{theorem}\label{Th-2-unconditional}  Assume that the initial data $(\rho_0,\mm_0)\in BV(\R)$ satisfy \eqref{hyp-init_data}.
Then there exists an entropy weak solution $(\rho,\mm)$ with concentration  along $a(t)$ and $b(t)$ to the Cauchy problem \eqref{eq:system_Eulerian}, \eqref{eq:init-data} with pressure~\eqref{gamma=1} that satisfies \eqref{cons-of-mass} and \eqref{cons-of-momentum} and admits \emph{time-asymptotic flocking}.
In particular, the decay occurs exponentially fast and for  some $\rho_\infty>0$,
one has that
\begin{equation}\label{Th-2exp}
    \esssup_{x\in (a(t), b(t))} |\rho(x,t)- \rho_\infty|\,,\qquad \esssup_{x\in (a(t), b(t))} |\vv(x,t) - \bar \vv|
\le C_2'e^{- C_1'\sqrt{K}   t},\,\,\qquad \forall\, t>0\,
\end{equation}
for some positive constants $C_1',\,C_2'$.
\end{theorem}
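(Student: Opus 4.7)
The plan is to combine the global existence of entropy weak solutions with concentration of~\cite{AC_2021} with a quantitative dissipation estimate in Riemann-invariant form, so as to obtain exponential decay of the velocity and density oscillations \emph{without} any smallness assumption on the initial data.

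A first step is to pass, on the non-vacuum region $\Omega$, to the Riemann invariants $w = \vv + \alpha\ln\rho$ and $z = \vv - \alpha\ln\rho$ of the isothermal Euler subsystem, which diagonalize \eqref{eq:system_Eulerian-rho-mm} as
\begin{equation*}
\partial_t w + (\vv+\alpha)\partial_x w = \mathcal{S}(x,t),\qquad \partial_t z + (\vv-\alpha)\partial_x z = \mathcal{S}(x,t),
\end{equation*}
with the \emph{common} source $\mathcal{S}(x,t) = K\big(\int_{\R}\rho(x',t)\vv(x',t)\,dx' - \MM\,\vv(x,t)\big)$. Invoking the conservation identity~\eqref{cons-of-momentum} rewrites this source as $-K\MM(\vv - \bar\vv) + K(P_a(t) - P_b(t))$, exposing the expected damping towards the asymptotic velocity $\bar\vv$ up to boundary corrections that, thanks to the exponential kernel in~\eqref{def:Pnu-intro}, remain uniformly small once $\rho$ at the interfaces is controlled.

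The core step is then to study the Lyapunov-type quantity
\begin{equation*}
\Theta(t) \;:=\; \osc_{I(t)} w(\cdot,t) \,+\, \osc_{I(t)} z(\cdot,t),
\end{equation*}
which simultaneously controls $\osc \vv = \tfrac12 \osc(w+z)$ and $\osc \ln\rho = \tfrac{1}{2\alpha} \osc(w-z)$. Along the front-tracking approximations of~\cite{AC_2021}, the source $\mathcal{S}$ contributes a pure damping to the extrema of $w$ and $z$ at rate $K\MM$, while interactions of interior $1$- and $2$-fronts with the delta-shocks located at $a(t),b(t)$ may in principle generate reflected waves of controlled size. Balancing the alignment rate $K\MM$ against the round-trip time $L(t)/\alpha$ of a pressure wave across the support of length $L(t) = b(t)-a(t)$ would yield a differential inequality of the form $\Theta'(t) \le -c\sqrt{K}\,\Theta(t)$, with $c>0$ depending only on $\alpha$, $\MM$, and $L(0)$. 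The diameter is then bounded uniformly, since $|L'(t)| = |\vv(b(t)-,t)-\vv(a(t)+,t)| \le \tfrac12\Theta(t)$ is integrable, so $L(t) \to L_\infty \in (0,+\infty)$, and the flocking density is identified as $\rho_\infty = \MM/L_\infty$ via~\eqref{cons-of-mass}; this gives both items of Definition~\ref{Def: flo} together with the rate~\eqref{Th-2exp}.

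The main obstacle lies in establishing the Lyapunov inequality used in the previous step: differently from the \emph{conditional} result of~\cite{AC_2021}, no smallness hypothesis is available a priori to prevent front amplification through reflections at the moving interfaces, so one has to verify that the alignment damping \emph{strictly} dominates the generation of reflected waves, uniformly in the total variation of the initial data. A natural route is to weight the Glimm-type interaction potential of the front-tracking approximations by the exponential factor $e^{-K\MM t}$ already intrinsic to~\eqref{def:Pnu-intro}, and check monotonicity of this weighted functional; the exponential decay rate $\sqrt{K}$ in~\eqref{Th-2exp} and the passage to the limit $\DX,\DT\to 0$ would then follow by standard $BV$ compactness arguments.
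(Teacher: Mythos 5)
Your outline leaves the central difficulty unproved, and the step you defer is precisely the one that cannot be obtained the way you suggest. The asserted Lyapunov inequality $\Theta'(t)\le -c\sqrt{K}\,\Theta(t)$ does not follow from the claim that the source ``contributes a pure damping to the extrema of $w$ and $z$ at rate $K\MM$'': writing the source as $-K\MM(\vv-\bar\vv)+K(P_a-P_b)$ and using $\vv=\tfrac12(w+z)$, the equation for $w$ carries the term $-\tfrac{K\MM}{2}(w-\bar w)-\tfrac{K\MM}{2}(z-\bar z)$ (and symmetrically for $z$), so each Riemann invariant is damped \emph{and} cross-forced by the other with the same coefficient; the naive estimate on $\Theta=\osc w+\osc z$ therefore gives $\Theta'\le 0 + (\text{wave-interaction terms})$, i.e.\ no strict decay at all from the alignment term alone. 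The damping acts only on the velocity component, not on $\osc\ln\rho$, which is exactly why a smallness condition of the type \eqref{eq:def-q} was needed in~\cite{AC_2021} to control reflections, and why your fallback --- weighting a Glimm-type potential by $e^{-K\MM t}$ --- cannot work either: such a weight decays for trivial reasons and yields no information on the unweighted total variation, while proving monotonicity of a genuinely useful weighted functional without smallness is the open difficulty, not a routine check. Also, the $\sqrt K$ in \eqref{Th-2exp} is not obtained by balancing $K\MM$ against a round-trip time $L(t)/\alpha$; in the paper it comes solely from the rescaling $(x,t)\mapsto(\sqrt K\,x,\sqrt K\,t)$ that reduces to $K=1$.

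The paper's route is structurally different and avoids any differential inequality for an oscillation functional. Working on the front-tracking approximations in \emph{Lagrangian} coordinates, it uses the non-increasing linear functional $L^\nu$ of \eqref{def:Lin} and proves: (i) the rarefaction part $R^\nu(t)$ is bounded by the variation of $L^\nu$ over a window $[t,t+\bar T]$, via a trapping triangular region for each wave family and the weighted functional \eqref{def:F2} (Lemma~\ref{lem:rar-vanishes-for-large-t}); (ii) if the shock part persisted, the fractional-step damping would keep producing a uniformly positive amount of rarefactions at the time steps, contradicting (i), so $L^\infty(t)\to 0$ (Lemma~\ref{L4.2}); (iii) exponential decay follows from a half-life estimate $L^\infty(\tau+T^*)\le \tfrac12 L^\infty(\tau)$, again proved by contradiction using the same mechanism (Lemma~\ref{S5Prop 5.1}), with non-explicit constants, and only then does one rescale back to general $K$. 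Your proposal identifies the right conserved quantities and the correct qualitative picture of flocking, but the decay mechanism it relies on (direct damping of the Riemann-invariant oscillation) is not the one at work, so the argument as written has a genuine gap at its core step.
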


\medskip
Global existence of such weak solutions is shown in~\cite{AC_2021} for $K=1$ and without much effort this is extended to any constant kernel $K>0$ in our setting  by scaling the variables. However, the time asymptotic flocking needs careful treatment different from~\cite{AC_2021}. Let us point out that in our previous result~\cite{AC_2021}, time asymptotic flocking is achieved under a special condition on the bulk of the initial data. More precisely, by means of Theorem~1.2 in \cite{AC_2021} and the case $K=1$, one obtains a sufficient condition for time-asymptotic flocking that involves the parameter $\alpha$ in~\eqref{gamma=1}, the total mass $\MM$, pointwise values of $\rho_0$ at the endpoints $a_0+$ and $b_0-$ and the quantity
\begin{equation}\label{eq:def-q}
q: = \frac 12 \tv \{\ln(\rho_0)\} +  \frac 1{2\alpha} \tv \{\vv_0\}    \,.
\end{equation}
Moreover, the oscillation of the velocity decays exponentially fast to zero. In the present work, we prove that the time-asymptotic flocking occurs \emph{unconditionally}, i.e. for \emph{any} initial data $(\rho_0,\mm_0)\in BV(\R)$ that satisfy \eqref{hyp-init_data} without any further condition and also capture exponential decay again. Furthermore, it is immediate from our analysis that the density $\rho$ decays exponentially fast to a constant state as obtained in~\eqref{Th-2exp} while this is not a requirement for flocking according to Definition~\ref{Def: flo}.

Our analysis relies alot on the construction in~\cite{AC_2021} of an approximate solutions that converge up to a subsequence in $L^1_{loc}$ to \emph{entropy weak solutions with concentration} along the interfaces $a(t)$ and $b(t)$, but it requires different tools and strategy to study the wave decay and control the total variation as $t\to\infty$ without any further conditions. Let's introduce the construction in ~\cite{AC_2021}.
First, by conservation of total mass and total extended momentum, the system can be reduced to a local one.
Indeed, by means of \eqref{cons-of-mass} and \eqref{cons-of-momentum}, the integral term on the right hand side of \eqref{eq:system_Eulerian} can be rewritten as
\begin{align*}
K\rho(x,t) \left\{ <  \widehat\mm(\cdot,t) ,\phi_1> -  \vv(x,t)  \int_\R \rho(x',t)\,dx'\right\}
&=  K\rho(x,t) \left(\MM_1 - \vv(x,t) \MM \right)\\
&= K \MM \rho(x,t) \left(\bar \vv - \vv(x,t)\right)\,,
\end{align*}
where $\phi_1$ is a test function equal to 1 for $x\in I(t)$ while $\bar\vv$ is the average speed given at~\eqref{def:vbar}.
Thus, system \eqref{eq:system_Eulerian} or~\eqref{eq:system_Eulerian-rho-mm} rewrites as
\begin{equation}\label{eq:system_Eulerian_M-M1-K}
\begin{cases}
\partial_t\rho +  \partial_x \mm  = 0, &\\[2mm]
\partial_t \mm  +  \partial_x \left(\mm^2/\rho + p(\rho) \right) = - K\, \MM \rho \left(\vv -\bar \vv \right)\,.
\end{cases} 
\end{equation}
and the equivalence between the nonlocal and the local system relies on the conservation of $\MM$ and $\MM_1$, hence it is suitable in the setting of our notion of weak solution with concentration. 

Now, to construct a convergent approximate sequence to systems of conservation laws, we require strict hyperbolicity. 
Since the Cauchy problem~\eqref{eq:system_Eulerian_M-M1-K},~\eqref{eq:init-data} is not strictly hyperbolic  on $\mathbb{R}\times(0,\infty)$, we transform the  problem into Lagrangian coordinates in the spirit of Wagner~\cite{W87}. However, the equivalence from Eulerian $(\rho(x,t),\mm(x,t))$ into the Lagrangian variables $(u(y,t), v(y,t))$ needs to be verified in our set up because in~\cite{W87}, it is assumed infinite total mass. Actually, our finite total mass condition has as an implication that the problem in Lagrangian coordinates is an initial boundary problem with fixed boundaries at $y=0$ and $y=\MM$. More precisely, by recasting system~\eqref{eq:system_Eulerian_M-M1-K} from Eulerian $(\rho(x,t),\mm(x,t))$ into the Lagrangian variables $(u(y,t), v(y,t))$, we obtain the equations
\begin{equation}\label{S1eq:system_Lagrangian}
\begin{cases}
\partial_\t u -  \partial_y  v  = 0, &\\
\partial_\t v  +  \partial_y (\alpha^2/u) =   - K\MM (v-\bar \vv) & 
\\ 
\end{cases} 
\end{equation}
with the domain $\{(y,t);\ t\ge 0\,,\ y\in (0,\MM)\}$\,. 
It should be mentioned that global existence of weak solutions to the homogeneous system ($\MM=0$) corresponding to~\eqref{S1eq:system_Lagrangian} was first obtained by Nishida~\cite{Nishida68} using the random choice method for initial data of large \BV for both the Cauchy and the boundary case. For the non-homogeneous system~\eqref{S1eq:system_Lagrangian}, global weak solutions were constructed in~\cite{Dafermos_frictional,LuoNatYan,AmadoriGuerra01}, using either the random choice or the front tracking methods, but only for the Cauchy problem. 
Also, Frid~\cite{Frid96} studied existence under certain initial-boundary value problems on a bounded domain for systems arising in isentropic gas dynamics and elasticity theory, which are different type from ours. Therefore, the aformentioned results do not apply in our setting and for this reason the machinery of front tracking algorithm was incorporated to establish the existence in~\cite{AC_2021} for $K=1$. This result is trivially extended to any constant $K>0$. 

The heart of the matter is to prove that the total variation decays to zero as time tends to infinity without any further condition on the initial data. This requires delicate estimates on the cancellation properties of the wave strengths, that leads to wave decay. 
The analysis is performed around interaction of waves and time steps at the level of the approximate solutions using a novel approach to quantify the change of the total variation and get sharp bounds. Our approach relies on the interplay between the rarefaction part and the total variation and a chase to reveal the hidden relation between the two. The crucial point that uncovers the vanishing property of the total variation is that the rarefaction part of the solution is controlled by the variation of the linear functional. Since the linear functional is non-increasing, if we wait long enough, the  rarefaction part can become arbitrarily small. The key idea to show that the rarefaction part can be controlled in this way relies on a region that serves as a trapped area for each family of rarefaction waves (see Lemma~\ref{lem:rar-vanishes-for-large-t} and Figure~\ref{Fig1}). This region is actually triangular in Lagrangian coordinates and with an appropriate weighted functional we study the possible cases that can occur. As a consequence, also the total amount of shocks must vanish as $t\to\infty$: indeed, 
we show in Lemma~\ref{L4.2} that a persisting positive amount of shocks would  produce a uniformly positive amount of rarefactions, which contradicts the property that rarefactions vanish as $t\to\infty$. Finally, again by means of wave decay, we show in Lemma~\ref{S5Prop 5.1} that the total variation cannot stay uniformly positive for arbitrarily large times and then that there exists a time $T^*$ such that the total variation of the unknowns $\rho$, $\vv$ reduces by a geometric rate when passing from a time $\tau$ to time $\tau+T^*$ within the non-vacuum region $\Omega$.

The paper is organized as follows: We first state the definition of \emph{entropy weak solution with concentration} in Section~\ref{S2} and show conservation of total mass and total extended momentum for such solutions. Then in Section~\ref{S3}, we describe the strategy of our proof that is postponed for the next sections. Actually, in Section~\ref{S2.1}, we show that the solution admits time asymptotic flocking and this is proven to happen exponentially fast in Section~\ref{S5}.

\Section{Setting of the problem}\label{S2} 
In this section, we first present the definition of \emph{entropy weak solution with concentration} and then, we exploit it in terms of the conserved quantities. As it is shown, the concentration terms in~\eqref{def:m-hat} are crucial to establish conservation of total momentum, in the sense of \eqref{cons-of-momentum}.

First, we consider the functions 
$$
\eta(\rho,\mm)\,,\quad q(\rho,\mm)\,,
$$ 
defined on $(0,+\infty)\times \R$, in terms of $\rho> 0$ and $\mm$, that constitute a pair of entropy-entropy flux functions for the system \eqref{eq:system_Eulerian} i.e. they are differentiable on $(0,+\infty)\times \R$, $\eta$ is convex and the following relations hold
\begin{equation*}
    \left(-\vv^2 + p'\right)\eta_{\mm} = q_\rho\,,\qquad 
    \eta_\rho + 2\vv \eta_\mm = q_{\mm}\,.
\end{equation*}

Let us now introduce the appropriate notion of entropy weak solution for \eqref{eq:system_Eulerian}, or equivalently \eqref{eq:system_Eulerian-rho-mm}, 
together with \eqref{eq:init-data} and \eqref{gamma=1}, which we call \emph{entropy weak solution with concentration}. 

\begin{definition}\label{entropy-sol} 
Given $(\rho_0,\mm_0)\in BV(\R)$ that satisfy \eqref{hyp-init_data}, let $(\rho,\mm): [0,+\infty)\times \R \to \R^2$ be a function with the following properties:
\begin{itemize}
    \item the map $t\mapsto (\rho,\mm)(\cdot,t) \in L^1_{loc} \cap~ BV$  is continuous in $L^1_{loc}$;
    \smallskip
 \item 
 $\displaystyle\lim_{t\to 0+}(\rho, \mm)(\cdot,t)=\left(\rho_0, \mm_0\right)$ in $L^1_{loc}$;
    
     \smallskip
     
    \item there exist two locally Lipschitz curves $a(t)$ and $b(t)$, $t\in[0,+\infty)$, and a value $\rho_{inf}>0$ such that \eqref{cond-on-a-b}, \eqref{eq:particle-paths}, \eqref{eq:bounded-support} hold and  
\begin{equation}\label{eq:density-unif-positive}
\essinf_{I(t)} \rho(\cdot,t) \ge \rho_{inf}>0\qquad \forall\, t>0\,.
\end{equation}
\end{itemize}
Then $(\rho, \mm)$ is an entropy weak solution \textit{with concentration} along $a(t)$ and $b(t)$ of the problem~\eqref{eq:system_Eulerian},
\eqref{eq:init-data} with \eqref{gamma=1}, if 
\begin{itemize}
    \item[(a)]
 the following integral identities hold true for all test functions $\phi\in C^\infty_0(\R\times (0,\infty))$:
\begin{align}
&\iint \left\{\rho\phi_t + \mm\phi_x \right\}\; dxdt=0\;,\label{S1:rho-eq-phi2}\\
&\iint\left\{ \mm\phi_t
    + \left[ \frac{\mm^2}{\rho}+ p(\rho)  \right]\phi_x \right\}dx dt - { K} \iint\left[\MM\, \mm
    -M_1(t)\, \rho 
     \right] \phi \,dx dt \nonumber\\
&  -  \int_0^\infty   \left[p(\rho(b(t)-,t)) \phi(b(t),t) -p(\rho(a(t)+,t)) \phi(a(t),t) \right] \,dt=0  \label{S1:m-eq-phi2-pressure}
\end{align}
where $\MM$ and $M_1(t)$ are given in \eqref{cons-of-mass} and~\eqref{def-M-M1-t}, respectively.
\item[(b)] For every entropy-entropy flux pair $
\eta(\rho,\mm)$\,, $q(\rho,\mm)$
defined on $(0,+\infty)\times \R$ and with $\eta$ convex,
the inequality
 \begin{equation}\label{entropy-cond_rho-m}
    \partial_t \eta(\rho,\mm) + \partial_x q(\rho,\mm)\le { K}\, \eta_{\mm}  \left[M_1(t)\,\rho  - \MM\,\mm \right]
\end{equation}
holds in ${\mathcal D}'(\Omega)$.
\end{itemize}
\end{definition}

From Definition~\ref{entropy-sol}, taking test functions supported in $\Omega$, it is immediate  that this notion of solution considered is an entropy weak solution to the system
\begin{equation}\label{S2_eq:system_Eulerian - a}
\begin{cases}
\partial_t\rho +  \partial_x \mm  = 0, &\\
\ds \partial_t \mm  +  \partial_x \left(\frac{\mm^2}{\rho} + p(\rho) \right) = K \rho M_1(t) -
K \mm(x,t) \int_{I(t)} \rho(x',t) \,dx'  &
\end{cases} 
\end{equation}
for $(x,t) \in \Omega$  rather than $\R\times \R_+$, where $M_1(t)$ is given at~\eqref{def-M-M1-t}. However, for test functions continuous up to the interfaces $a(t)$ and $b(t)$, it is necessary to take into account boundary terms that arise from the term $\mm^2/\rho$. The presence of these boundary contributions in the notion of solution was motivated in~\cite[Sec. 2.1]{AC_2021} by studying the Riemann problem around vacuum. More precisely, consider the homogeneous system~\eqref{eq:system_Eulerian-rho-mm}, i.e. 
\begin{equation}\label{S2_eq:system_Eulerian - hom}
\begin{cases}
\partial_t\rho +  \partial_x \mm = 0, &\\
\ds \partial_t \mm  +  \partial_x \left(\frac{\mm^2}{\rho} + p(\rho) \right) = 0&
\end{cases} 
\end{equation}
with Riemann data
\begin{equation}\label{RH1-dataapprox}
(\rho,\mm)(x,0)=\left\{\begin{array}{ll}
(\rho_\ell,\mm_\ell) & x<0 
\\
(\delta,\mm_r(\delta))& x>0\,,
\end{array}\right.
\end{equation}
denote by $(\rho_\delta,\mm_\delta)$ its Riemann solution that is a $2$-shock and then take the limit $\delta\to0+$ to capture the $2$-shock solution $(\rho,\mm)$ around vacuum at the point $x=b_0=0$. Here the left state $(\rho_\ell,\mm_\ell)$ corresponds to a fixed state with $\rho_\ell>0$, while the right state $(\delta,\mm_r(\delta))$, where $\mm_r(\delta)=\delta\,\vv_r(\delta)$, varies in terms of the parameter $\delta>0$.
Using the Rankine-Hugoniot conditions 
\begin{align}\label{RH1}
    \sigma(\rho_\ell-\delta) &=\rho_\ell \vv_\ell-\delta\, \vv_r(\delta)\\[2mm] \label{RH2}
    \sigma(\rho_\ell \vv_\ell-\delta \vv_r(\delta)) &=\rho_\ell \vv_\ell^2-\delta\, \vv_r(\delta)^2+p(\rho_\ell)-p(\delta) 
\end{align}
and the shock wave curve of the second family
\begin{equation}\label{S2reply} 
 S_2:\quad  \vv_r(\delta) =  
    \vv_\ell -\sqrt{\dfrac{\left(p(\delta)-p(\rho_\ell)\right) \left(\delta - \rho_\ell \right) }{\delta\rho_\ell}
		}  \qquad  0<\delta\le\rho_\ell\,, 
\end{equation} 
we can determine this solution
\begin{equation}\label{def:RP-sol-rho-m}
(\rho_\delta,\mm_\delta)(x,t)=\left\{\begin{array}{ll}
(\rho_\ell,\mm_\ell
)\,, & \quad x< \sigma t \\
(\delta, \mm_r(\delta))\,,&\quad  x>\sigma t
\end{array}
\right.
\end{equation}
for $\delta \in (0,\rho_\ell)$ where $\mm_r(\delta) = \delta \, \vv_r(\delta)$ is obtained from \eqref{S2reply} and the shock speed $\sigma$ is given by \eqref{RH1}, \eqref{RH2} noting that $\sigma$ also depends on $\delta$. As $\delta\to0+$, we have $\sigma\to \vv_\ell$ and $(\rho_\delta,\mm_\delta)\to(\rho,\mm)$ and it is immediate that the limit satisfies
 \begin{equation}\label{def:RP-sol-tilde-rho-m}
(\rho,\mm)(x,t):=\left\{\begin{array}{ll}
(\rho_\ell,\mm_\ell
)\,, &\quad x< t \vv_\ell  \\
(0,0)\,,&\quad x>t \vv_\ell\,.
\end{array}
\right.
\end{equation}
Now, the weak formulation of the homogeneous system~\eqref{eq:system_Eulerian-rho-mm}$_2$, for $\delta>0$ reads as
\begin{equation*}
\iint \left\{\mm_\delta\,\phi_t + \left(\frac{(\mm_\delta)^2}{\rho_\delta} + p(\rho_\delta) \right)\phi_x \right\}\; dxdt=0
\end{equation*}
for all $\phi \in C_0^\infty(\R\times (0,+\infty))$ and as $\delta\to 0+$, one can check that the term $(\mm_\delta)^2/\rho_\delta$ in the region $x > \sigma t$ does not vanish, but it gives a non trivial contribution 
\begin{equation*}
    \iint_{\{x> \sigma t \}} \frac{(\mm_\delta)^2}{\rho_\delta}\phi_x \; dxdt 
    ~~\xrightarrow{\delta\to0+}~~ - p(\rho_\ell) \int_0^{+\infty} \phi( \vv_\ell t,t)\;dt \,,
\end{equation*}
along the interface $x=\vv_\ell t$.
In fact, this results into a concentration term $p(\rho_\ell)$ centered at $x=b(t)=\vv_\ell\, t$.
Therefore, the limit function $(\rho, \mm)$ satisfies the integral identities \eqref{S1:rho-eq-phi2} and
\begin{equation*}
 \int_0^{+\infty} \left\{\int_{\R}\mm \,\phi_t \right.  +  \left. p(\rho)\phi_x \;dx + 
    \int_{x< t \vv_\ell}\left(\frac{\mm^2}{\rho}\right)\phi_x\;dx -p(\rho_\ell)\phi(\vv_\ell t,t) \right\}\; dt=0\,.
\end{equation*}
We point out that the above computation is performed for the homogeneous system~\eqref{S2_eq:system_Eulerian - hom} and one can reach the integral identity~\eqref{S1:m-eq-phi2-pressure} if the source term is included and both endpoints of the support $(a(t),b(t))$ are involved. Another point is that one could express the weak formulation for the extended momentum $\hat m$ using a \emph{Dirac delta} that is concentrated 
along the discontinuity $\gamma=\{(\vv_\ell t,t);\ t\ge 0\}$.

 Regarding now the conserved quantities, it is obvious that the conservation of total mass as mentioned in~\eqref{cons-of-mass} holds. 
 Since there are concentration terms through the values of the pressure along the two interfaces, we confirm in the next proposition that the extended momentum $\hat\mm$ given in~\eqref{def:m-hat} yields conservation of total momentum. It is of course important that the absolutely continuous functions $a(t) < b(t)$, which are also unknowns of the problem, correspond to particle paths, i.e.~\eqref{eq:particle-paths},
and as it is shown below from here and on $M_1(t)$ is constant in time equal to $\MM_1$, i.e. \eqref{cons-of-momentum}. Thus, the equivalence of the local system~\eqref{eq:system_Eulerian_M-M1-K} with the nonlocal~\eqref{eq:system_Eulerian} or~\eqref{eq:system_Eulerian-rho-mm} is verified for \emph{entropy weak solutions with concentration}.

\begin{proposition}\label{prop:cons-mass-momentum} 
An entropy weak solution \textit{with concentration} along $a(t)$ and $b(t)$ to the problem~\eqref{eq:system_Eulerian}, \eqref{eq:init-data} with \eqref{gamma=1} according to Definition~\ref{entropy-sol} conserves total mass and total extended momentum, i.e. it satisfies \eqref{cons-of-mass} and \eqref{cons-of-momentum}.
\end{proposition}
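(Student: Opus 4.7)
The strategy is to test the weak formulations \eqref{S1:rho-eq-phi2} and \eqref{S1:m-eq-phi2-pressure} against a product test function of the form $\phi(x,t)=\psi(t)\eta(x)$, where $\psi\in C_0^\infty(0,\infty)$ is a mollified approximation of the characteristic function $\chi_{[t_1,t_2]}$, and $\eta\in C_0^\infty(\R)$ satisfies $\eta\equiv 1$ on an interval large enough to contain the support $I(t)=[a(t),b(t)]$ for all $t$ in a neighborhood of $[t_1,t_2]$. This is possible because $a,b$ are locally Lipschitz by Definition~\ref{entropy-sol}. With this choice, $\eta_x$ vanishes identically on $I(t)$, and since $(\rho,\mm)$ is supported in $I(t)$ and $p(\rho)=\alpha^2\rho$ vanishes outside $I(t)$, every flux contribution containing $\phi_x$ disappears.

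Conservation of total mass \eqref{cons-of-mass} is then immediate: plugging $\phi$ into \eqref{S1:rho-eq-phi2} gives $\int\psi'(t)\int_\R\rho(x,t)\,dx\,dt=0$, and letting $\psi\to \chi_{[t_1,t_2]}$ together with the $L^1_{loc}$ continuity in $t$ of the solution yields $\int_\R\rho(\cdot,t_2)\,dx=\int_\R\rho(\cdot,t_1)\,dx$; taking $t_1\to 0^+$ and using the initial condition fixes the common value as $\MM$.

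For conservation of total extended momentum \eqref{cons-of-momentum}, plugging the same $\phi$ into \eqref{S1:m-eq-phi2-pressure} reduces the identity to
\begin{equation*}
\int_0^\infty\psi'(t)\!\int_{I(t)}\!\mm(x,t)\,dx\,dt - K\MM\!\int_0^\infty\!\psi(t)\bigl(P_b(t)-P_a(t)\bigr)\,dt - \int_0^\infty\!\psi(t)\bigl[p(\rho(b(t)-,t))-p(\rho(a(t)+,t))\bigr]dt = 0,
\end{equation*}
where I used $\int\rho\,dx=\MM$ and $\int\mm\,dx=M_1(t)-P_b(t)+P_a(t)$ from \eqref{def-M-M1-t}. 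The key observation is that differentiating the definition \eqref{def:Pnu-intro} gives the ODEs $P_b'(t)=p(\rho(b(t)-,t))-K\MM P_b(t)$ and analogously for $P_a$, so
\begin{equation*}
\bigl(P_b-P_a\bigr)'(t)+K\MM\bigl(P_b(t)-P_a(t)\bigr) = p(\rho(b(t)-,t))-p(\rho(a(t)+,t)).
\end{equation*}
Integrating this against $\psi(t)$ and performing an integration by parts (legal since $\psi\in C_0^\infty(0,\infty)$ and $P_b(0)=P_a(0)=0$) collapses the last two terms of the displayed identity into $\int_0^\infty\psi'(t)(P_b(t)-P_a(t))\,dt$. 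Combined with the first term, the identity becomes $\int_0^\infty\psi'(t)M_1(t)\,dt=0$, which by the arbitrariness of $\psi$ forces $M_1$ to be constant.

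The main subtlety will be the passage $t_1\to 0^+$ to identify this constant with $\MM_1$. Because $P_a,P_b$ are absolutely continuous and vanish at $t=0$, and $\mm(\cdot,t)\to \mm_0$ in $L^1_{loc}$, one has $M_1(t)\to\int_\R\mm_0\,dx=\MM_1$ as $t\to 0^+$, yielding \eqref{cons-of-momentum}. The only technical delicacy lies in verifying that the boundary evaluations $p(\rho(a(t)+,t))$ and $p(\rho(b(t)-,t))$ are well-defined and locally integrable in $t$, which follows from $(\rho,\mm)(\cdot,t)\in BV$ together with the $L^1_{loc}$ continuity in time and the uniform positivity \eqref{eq:density-unif-positive}; these ensure the one-sided traces along the Lipschitz interfaces $a(t),b(t)$ exist and are bounded.
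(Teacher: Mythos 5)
Your argument is correct and is essentially the paper's own proof: both rest on the ODE $P_b'(t)+K\MM P_b(t)=p(\rho(b(t)-,t))$ (and its analogue at $a(t)$), on testing with $\phi(x,t)=\psi(t)\eta(x)$ where $\eta\equiv 1$ on the support so that every $\phi_x$-term drops, on the cancellation of the $K\MM$ contributions leaving $\int_0^\infty\psi'(t)M_1(t)\,dt=0$, and on identifying the constant with $\MM_1$ by continuity as $t\to 0^+$ using $P_a(0)=P_b(0)=0$; the paper merely packages the boundary terms through the extended momentum $\widehat\mm$ and the pairing $\langle\delta_{b(t)}P_b(t),\phi_t-K\MM\phi\rangle$, which is the same computation as your integration by parts on $P_b-P_a$, and it handles mass via the Rankine--Hugoniot relations along the interfaces, equivalent to your direct test-function argument. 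One slip to fix in the write-up: in your displayed identity the middle term should read $+K\MM\int_0^\infty\psi(t)\bigl(P_b(t)-P_a(t)\bigr)\,dt$, because $\int_{I(t)}\mm\,dx-M_1(t)=-(P_b(t)-P_a(t))$ and the source term carries an overall minus sign; it is precisely with this corrected sign that the last two terms collapse to $\int_0^\infty\psi'(t)\bigl(P_b(t)-P_a(t)\bigr)\,dt$ as you claim (with the sign as displayed they would leave a spurious $-2K\MM\int\psi(P_b-P_a)\,dt$), so after this correction the rest of your argument goes through unchanged.
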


\begin{proof} The conservation of mass \eqref{cons-of-mass} follows from the Rankine-Hugoniot conditions that hold true along the interfaces by~\eqref{eq:particle-paths}.

Concerning \eqref{cons-of-momentum}, we prove that the choice \eqref{def:Pnu-intro} of the point masses at $a(t)$, $b(t)$ yield the conservation of the total extended momentum.
Indeed, by definition~\eqref{def:Pnu-intro}, we have 
\begin{equation*}
    P_b'(t) +  K \MM P_b(t) = p(\rho(b(t)-,t))\,,
\end{equation*}
and therefore
\begin{align*}
    \int_0^{\infty}<\delta_{b(t)} P_b(t),\phi_t(\cdot,t) - K\MM \phi> dt = - \int_0^\infty   p(\rho(b(t)-,t)) \phi(b(t),t) \,dt \,.
\end{align*} 
An analogue identity holds at $x=a(t)$\, and therefore, by definition \eqref{def:m-hat}, we are led to the identity:
\begin{align*} 
&\int_0^{\infty}<\widehat\mm(\cdot,t),\phi_t(\cdot,t) - K\MM \phi> dt = \iint_{\Omega} \mm \left(\phi_t -K \MM\phi \right)\,dxdt \\
&\qquad - \int_0^\infty   \left[p(\rho(b(t)-,t)) \phi(b(t),t) -p(\rho(a(t)+,t)) \phi(a(t),t) \right] \,dt \,. 
\end{align*}
Hence, the integral identity~\eqref{S1:m-eq-phi2-pressure} reduces to:
\begin{align}\nonumber
& \int_0^{\infty}<\widehat\mm(\cdot,t),\phi_t(\cdot,t) > dt    + \iint  \left[ \frac{\mm^2}{\rho}+ p(\rho)  \right]\phi_x dx dt \\ 
& \qquad \qquad + K \iint     \rho \,{ M_1(t)} 
     \phi \,dx dt   -  K \MM \int_0^{\infty}<\widehat\mm(\cdot,t),\phi> dt 
     =0\;. \label{eq:id-for-m-hat}
\end{align}
If we choose the test function $\phi(x,t) = \phi_1(x)\psi(t)$, with $\phi_1(x) =1$ for all $x\in \cup_{t\in [T_1,T_2]} I(t)$ and $\psi(t)=0$ for $t\not \in [T_1,T_2]$, with $0<T_1<T_2$, and notice that $M_1(t)=<  \widehat\mm(\cdot,t) ,\phi_1>$, we get $M_1(T_2)-M_1(T_1)=0$. 

Since this holds true for arbitrary times, $0<T_1<T_2$, we deduce the conservation of the total extended momentum $M_1(t)$.
By the time continuity of $\int \mm(\cdot,t)\, dx$ and the definition \eqref{def:Pnu-intro}, we conclude that $M_1(t) = M_1(0+)$ that is  \eqref{cons-of-momentum}.
\end{proof}

\Section{Outline of the proof}\label{S3}
In this section, we outline the strategy of the proof of Theorem~\ref{Th-2-unconditional} and for better readability, we organize it into four steps.

\bigskip
$\bullet$ Step 1: Rescaling system~\eqref{eq:system_Eulerian_M-M1-K} to the case $K=1$.

First, we reduce the problem to the case that the constant interaction kernel is $1$ by rescaling the coordinates as follows: let $\lambda>0$ a parameter to be determined and consider the map
$$
(x,t) \mapsto \Phi_\lambda (x,t) = (\lambda x,\lambda t) ~\dot =~ (x',t')\,,
$$
with inverse:
$$
(\Phi_\lambda)^{-1} (x',t') = (x'/\lambda,t'/\lambda) = (x,t).
$$
Now, define the scaled density $\tilde \rho$ as function of $(x',t')$:
$$
(x',t') \mapsto \tilde\rho(x',t')~\dot =~ \rho\left( (\Phi_\lambda)^{-1} (x',t') \right) = \rho(x,t) \;,
$$
or equivalently,
$$
(x',t') \mapsto \tilde\rho(x',t')~\dot =~ \rho(x'/\lambda,t'/\lambda)
$$
and similarly $\tilde \vv$ and $\tilde \mm$.
Therefore, in the new coordinate system, we have the equations
\begin{equation}\label{eq:system_Eulerian_M-M1}
\begin{cases}
\partial_{t'}\tilde\rho +  \partial_{x'} (\tilde\rho \tilde\vv)  = 0, &\\
\partial_{t'} (\tilde\rho \tilde\vv)  +  \partial_{x'} \left(\tilde\rho \tilde\vv^2 + p(\tilde\rho) \right) =- \frac{K}{\lambda^2}{ \widetilde \MM \tilde\rho \left(\tilde\vv -\bar{\tilde \vv} \right)} \,
&
\end{cases} 
\end{equation}
where $\widetilde\MM$ and $\widetilde\MM_1$ are the total mass and total extended momentum for the scaled variables $(\tilde \rho(x',t'),\tilde \mm(x',t'))$. 
It is straightforward to check that $\widetilde \MM= \lambda \MM$ and $\widetilde \MM_1= \lambda  \MM_1$ while $\bar{\tilde \vv}=\bar \vv$ and the interval of the support is
$[\tilde a(t), \tilde b(t)]=[\lambda\, a(t),\lambda \, b(t)] $.
Selecting $\lambda^2=K$, we reduce to~\eqref{eq:system_Eulerian_M-M1} with kernel being $1$. To simplify the notation, from here and on, we regard that $(\rho,\mm)$ satisfies ~\eqref{eq:system_Eulerian_M-M1} for $K=1$ and without loss of generality we use $x$ and $t$, $\rho$ and $\mm$ for the rescaled variables.

Furthermore, we reduce the problem to zero average velocity. More precisely, by the definition of the average velocity $\bar \vv$ at \eqref{def:vbar}, we perform the change of variables 
\begin{equation*}
x\mapsto x-\bar \vv \,t\,,\qquad \vv\mapsto \vv- \bar \vv
\end{equation*}
that allows us to reduce to the case of $\MM_1=0$, with $\MM_1$ being defined at \eqref{cons-of-momentum}.
Indeed, in the new variables called again $(x,t)$, the total extended momentum becomes zero: 
\begin{equation}\label{eq:zero-average-momentum}
\int_{I(t)} \mm(x,t) \,dx+P_b(t)-P_a(t) = 0\qquad \forall \,t\ge0
\end{equation}
and system \eqref{eq:system_Eulerian} or~\eqref{eq:system_Eulerian_M-M1} takes the form
\begin{equation}\label{eq:system_Eulerian_M-M1-K=1}
\begin{cases}
\partial_t\rho +  \partial_x \mm  = 0, &\\[2mm]
\partial_t \mm  +  \partial_x \left(\mm^2/\rho + p(\rho) \right) = - \, \MM \rho \vv \,.
\end{cases} 
\end{equation}
Therefore, from here and on, we can assume that $\MM_1=\bar \vv =0$ and consider
system~\eqref{eq:system_Eulerian_M-M1-K=1}. We will return to the original notation in \S~\ref{Subsec:5.1}.

 \bigskip
$\bullet$ Step 2: Existence of weak solutions. 

From Theorem 1.1 in~\cite{AC_2021}, there exists an entropy weak solution $(\rho,\mm)$ with concentration along $a(t)$ and $b(t)$ to~\eqref{eq:system_Eulerian_M-M1-K=1}. According to Definition~\ref{entropy-sol} and with $\MM_1=0$, the integral identity~\eqref{S1:m-eq-phi2-pressure} of the entropy weak solution with concentration rewrites as:
\begin{align*}
\iint_{\Omega}&\left\{ \mm\phi_t
    + \left[\frac{\mm^2}{\rho}+p(\rho)  \right]\phi_x
    -\MM \mm \phi \right\}\,dx dt \nonumber\\
     &   - \int_0^\infty   \left[p(\rho(b(t)-,t)) \phi(b(t),t) -p(\rho(a(t)+,t)) \phi(a(t),t) \right] \,dt=0\;,
\end{align*}
for all test functions $\phi\in C^\infty_0(\R\times (0,\infty))$.

Following \cite[Subsect.~2.2]{AC_2021}, we transform the problem from Eulerian into Lagrangian coordinates $(y,t)$ as follows: consider the map
\begin{equation}\label{Step2 chi}
    x\to y\doteq\chi(x,t)=\int_{-\infty}^x\rho(z,t)\,dz\in[0,\MM]
\end{equation}
that is non-decreasing and it maps $\mathbb{R}\to[0,\MM]$. For the structure of solution satisfying the lower bound~\eqref{eq:density-unif-positive} that we aim for, the pseudoinverse $y\mapsto\chi^{-1}(y,t)$ is injective from $(0,\MM)$ to $(a(t),b(t))$. Hence, setting
$$
u(y,t)\doteq1/\rho(x,t),\qquad v(y,t)\doteq\vv(x,t)
$$
we arrive at the system with frictional damping
\begin{equation}\label{eq:system_Lagrangian}
\begin{cases}
\partial_\t u -  \partial_y  v  = 0, &\\
\partial_\t v  +  \partial_y (\alpha^2/u) =   - \MM v\,. & 
\\ 
\end{cases} 
\end{equation}
Following  \cite[Subsect.~3.1]{AC_2021}, we consider a sequence of approximate solutions $(u^\nu, v^\nu)$, $\nu\in\mathbb{N}$, to the initial-boundary value problem~\eqref{eq:system_Lagrangian} on $(0,\MM)\times(0,+\infty)$, together with initial data $(u_0,v_0)=(\rho_0^{-1},\vv_0)$ such that:
\begin{equation}\label{eq:init-data-lagr}
(u_0,v_0)\in BV(0,\MM)\,,\qquad \essinf_{(0,\MM)} u_0 >0\,, \qquad  \int_0^\MM v_0(y)\, dy=0\,,
\end{equation}
together with  \emph{non-reflecting} boundary conditions at the boundaries $y=0$, $y=\MM$, that read as follows:
\begin{equation}\label{eq:bc-lagrangian}
    \begin{aligned}
    y=0: & \mbox{ \emph{any} state $(u,v)(0+,t)\in (0,+\infty)\times \R$ is admissible} \\
    y=\MM: &  \mbox{ \emph{any} state $(u,v)(\MM-,t)\in (0,+\infty)\times \R$ is admissible\,. }
    \end{aligned}
\end{equation}
More precisely, let $\DT=\DT_\nu\to 0+$ and $\eta=\eta_\nu\to 0+$ two decreasing sequences.
For any $\nu\in \N$, we say that a continuous map $(u^\nu, v^\nu): [0,\infty)\to L^1((0,\MM);\R)$ is a $\nu$-approximate front tracking solution if it satisfies the following properties.

\begin{enumerate}
     \item The map $(u^\nu, v^\nu):(0,\MM)\times (0,+\infty)$ is piecewise constant, with discontinuities occurring along finitely many straight lines.
     Three types of jump discontinuities are possible: shocks, rarefactions, time-step discontinuities.
     
   \vskip 3mm
    
    \item Along a shock, let $U_\ell = (u_\ell,v_\ell)$ and $U_r = (u_r,v_r)$ be the left and right states respectively. Then either
\begin{equation*}
       u_r<u_\ell, \qquad v_r = v_\ell - \alpha \left(  \sqrt{\frac{u_\ell}{u_r}}- \sqrt{\frac{u_r}{u_\ell}} \right)
\end{equation*}
or 
\begin{equation*}
    u_r>u_\ell, \qquad v_r= v_\ell -  \alpha \left(  \sqrt{\frac{u_r}{u_\ell}} - \sqrt{\frac{u_\ell}{u_r}} 
		 \right) 
\end{equation*}
in case of a shock of the first or second family, respectively. 
Let
$$
\Lambda = -\frac{v_r-v_\ell}{u_r-u_\ell}\,,\qquad |\Lambda| = \frac{\alpha}{\sqrt{u_r u_\ell}}
$$
be the speed provided by Rankine-Hugoniot equations.

   \vskip 3mm
    
\item Along a rarefaction, the jump is non-entropic. By means of the notation above, then either
\begin{equation*}
      u_r>u_\ell, \qquad v_r = v_\ell + \alpha \ln \left(\frac {u_r} {u_\ell}\right)
\end{equation*}
or 
\begin{equation*}
    u_r<u_\ell, \qquad v_r =  v_\ell - \alpha \ln \left(\frac {u_r} {u_\ell}\right)\,.
\end{equation*}
in case of first or second family, respectively.

The Rankine-Hugoniot speed here reads as 
$$
\Lambda = -\frac{v_r-v_\ell}{u_r-u_\ell}\,,\quad |\Lambda| = \frac{\alpha}{\tilde u}\qquad \tilde u\in [\min\{u_\ell,u_r\}, \max\{u_\ell,u_r\} ]\,.
$$

\vskip 3mm

\item Let $x(t)$ be the location of a wave-front (that is, either a shock or a rarefaction).
Its propagation speed $\dot x$ satisfies $|\dot x - \Lambda|\le \eta_\nu$, where $\Lambda$ is defined above according to the cases\,. Its size is defined by
\begin{equation}\label{eq:strengths}
\eps=
\begin{cases}
\ds\frac{1}{2}\ln\left(\frac{u_r}{u_\ell}\right)& \mbox{ 1st family}\\[4mm]
\ds\frac{1}{2}\ln\left(\frac{u_\ell}{u_r}\right) &\mbox{ 2nd family}\,.
\end{cases}
\end{equation}
The size of each rarefaction satisfies $0< \eps\le \eta_\nu$\,.

\vskip 3mm

\item At each time $t^n = n\DT$, $n\ge 1$ the following holds: 
\begin{equation}\label{eq:u-v_fractional-step}
u^\nu(y,t^n+) = u^\nu(y,t^n-)\,,\qquad  v^\nu(y,t^n+) = v^\nu(y,t^n-) \left(1-\MM\DT\right)\,.
\end{equation}

\vskip 3mm

\item When a wave-front  reaches the boundary, no wave is reflected.

\vskip 3mm

\item At $t=0$, the following holds:
\begin{equation*}
\tv \left(u^\nu_0, v^\nu_0\right) \le \tv \left(u_0, v_0\right)\,,\qquad \| \left(u^\nu_0, v^\nu_0\right) -  \left(u_0, v_0\right)\|_\infty \le \frac 1 \nu\,.
\end{equation*}
\end{enumerate}

In~\cite{AC_2021}, a sequence of $\nu$-approximate front tracking solutions $(u^\nu, v^\nu)$, is constructed that has uniform BV bounds and a convergent subsequence is extracted that in the limit provides us with an entropy weak solution $(u,v)$ to~\eqref{eq:system_Lagrangian}. 

Having now this convergent sequence and its limit, we return to the Eulerian setting. For each $\nu\in\N$, we define the approximate boundaries on the $(x,t)$ plane:
\begin{equation}\label{def:a-nu}
    a^\nu(t)~\dot = ~a_0 + \int_0^t v^\nu(0+,s) ds\,.
\end{equation}
and
\begin{equation}\label{def:b-nu}
    b^\nu(t)~ \dot = ~ a^\nu(t) + \int_0^\MM u^\nu\left(y,t \right) dy \,,
    \end{equation}
the interval of support $I^\nu(t)  ~\dot = ~  \left(a^\nu(t), b^\nu(t)\right)$ and the domain
\begin{align}
\Omega^\nu   &  ~\dot = ~ \{ (x,t);\ t\ge 0\,,\  x\in I^\nu(t) \}\subset \R\times[0,+\infty)\,. \nonumber
\end{align}
Moreover, from~\cite{AC_2021}, it is shown that the map
\begin{equation}\label{eq:def-of-inverse-chi}
    x^\nu(y,t) := a^\nu(t) + \int_0^y u^\nu\left(y',t \right) dy'
    \,,\qquad y\in (0,\MM),\ t\ge 0
\end{equation}
is invertible with the inverse to be
\begin{align*}
(x,t)&\mapsto y=\chi^\nu(x,t)\\
I^\nu(t) \times[0,\infty)&\mapsto (0,\MM)  
\end{align*}
where $\chi^\nu$ is the map~\eqref{Step2 chi} corresponding to the transformation from Eulerian to Lagrangian coordinates for the approximate sequence $(u^\nu, v^\nu)$. In view of this construction, we define the approximate sequence $(\rho^\nu, \mm^\nu)$ to~\eqref{eq:system_Eulerian_M-M1-K=1} on $\mathbb{R}$ from the following relations:
\begin{align}
\label{def-rho-v-m-in}
x\in I^\nu(t):& \quad 
\begin{cases}
\rho^\nu(x,t) = \{u^\nu (\chi^\nu(x,t),t)\}^{-1}
\,,&\\[1mm]
\mm^\nu(x,t)  = \rho^\nu(x,t) v^\nu(x,t)\,,
\end{cases}
    \\[2mm]
   \label{def-rho-m-out}
     x\not\in I^\nu(t):& \qquad  \rho^\nu(x,t) =0  ~= \mm^\nu(x,t)\,. 
\end{align}
and also consider the approximate extended momentum $\hat\mm^\nu$ using the corresponding formula to~\eqref{def:m-hat}. For the details of the construction, we refer to~\cite[Sect. 4]{AC_2021}. As it is proven in~\cite{AC_2021}, $a^\nu(\cdot)$ and $b^\nu(\cdot)$ converge uniformly on compact subsets of $[0,\infty)$ to the free boundaries $a(\cdot)$ and $b(\cdot)$ as $\nu\to\infty$. Moreover,     $(\rho^\nu,\mm^\nu)$ converges as $\nu\to\infty$, up to a subsequence, to the entropy weak solution $(\rho,\mm)$ in $L^1_{loc}(\mathbb{R}\times[0,+\infty))$ to~\eqref{eq:system_Eulerian_M-M1-K=1} with concentration along $a(t)$ and $b(t)$ in the sense of Definition~\ref{entropy-sol}.

\bigskip
$\bullet$ Step 3: Proving that the total variation decays to zero as $t\to+\infty$. 

At this step, we show that the weak solution $(\rho,\mm)$ constructed admits time asymptotic flocking. To show this, it suffices to prove that its total variation decays to zero as $t\to\infty$.

Consider an approximate solution $(u^\nu,v^\nu)$ with $\nu$ fixed, as introduced in Step 2, and fix a time $t$ at which no wave interaction occurs and different from times steps $t^n$. 
Take the fronts contained in $(0,\MM)$ that exist at time $t$ in the approximate solution constructed in \cite[\S 3]{AC_2021} and neglect the standby fronts that reached the boundaries at times $\tau$, $\tau\le t$ in Lagrangian coordinates.
Let $\{y_j\}_1^N$ be the location points of those fronts in $(0,\MM)$ such that
\begin{equation}\label{eq:disc-points}
  0<y_1< y_2<\ldots< y_{N(t)}<\MM  
\end{equation}
and are the discontinuity points of $(u^\nu,v^\nu)(\cdot,t)$, for some integer $N=N(t)$ depending on $t$. Also, let $\eps_j$ be the corresponding strength of the front located at $y_j$.  Then we have the \emph{linear functional}
\begin{equation}\label{def:Lin}
L^\nu(t) = \sum_{j=1}^{N(t)} |\eps_j|\,,
\end{equation}
that does not take into account the stand by fronts at the boundaries and let us clarify that this functional is identical to the so called \emph{inner linear functional} in~\cite{AC_2021}.
As it is shown in \cite[Lemma 3.2]{AC_2021} $L^\nu(t) $ is non-increasing in time and satisfies $L^\nu(0) \le q$ for all $\nu$, with $q$ defined at \eqref{eq:def-q}. 

From Lemma 3.1 and Lemma 3.2 in \cite{AC_2021}, the following properties hold:
\begin{equation}\label{eq:TV-bound-on-v_u}
   \frac{1}{C_1} \tv \{v^\nu(\cdot,t)\} \le \frac 12 \tv \{\ln(u^\nu)(\cdot,t) \} = L^\nu(t)\,, \qquad C_1= 2\alpha \cosh(q)\,.
\end{equation}
Therefore, the exponential decay of $L^\nu(t)$, uniformly in $\nu$ implies that the total variation of the solution decays exponentially fast as well.

We consider the right-continuous representative of $L^\nu$ in the $L^1_{loc}$ class, that is, we assume that $L^\nu(t)=L^\nu(t+)$ for all $t\ge 0$. 
Since $t\to L^\nu(t)$ is monotone, we can define
\begin{equation}\label{eq Linfty}
L^\nu_\infty  := \lim_{t\to\infty}  L^\nu(t) \,.
\end{equation}
The heart of the matter is to show that $L^\nu_\infty=0$ for all $\nu$ and that this limit is reached uniformly with respect to $\nu$. This result is stated in the following lemma:

\begin{lemma}\label{lem:2.1}
Assume that $\MM\DT_\nu<\min\{1,\frac 2{\cosh q}\}$, then the total variation $L^\nu(t)$ vanishes at infinity uniformly with respect to $\nu$ i.e. for every $\eps>0$ there exists a $T>0$ such that
\begin{equation*}
    0\le  L^\nu(t) <\eps \qquad \forall\ t>T\,,\quad \forall\ \nu\in\N\,.
\end{equation*}
\end{lemma}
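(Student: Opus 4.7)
The plan is to decompose $L^\nu(t)$ into its rarefaction and shock parts, show that each vanishes as $t\to\infty$, and then promote the pointwise decay to a uniform-in-$\nu$ geometric rate on a fixed time window. Write
\[
L^\nu(t) = R^\nu(t) + S^\nu(t), \qquad R^\nu(t) = \sum_{\eps_j > 0}\eps_j, \qquad S^\nu(t) = \sum_{\eps_j<0}|\eps_j|,
\]
with the sign convention of \eqref{eq:strengths}. Since $L^\nu$ is non-increasing by \cite[Lem.~3.2]{AC_2021} and non-negative, the limit $L^\nu_\infty$ in \eqref{eq Linfty} exists, and the task is to show it is zero uniformly in $\nu$.

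First, I would prove $R^\nu(t)\to 0$. For each rarefaction front present at some time $t_0$, the strategy is to construct a triangular trap region in the Lagrangian $(y,\tau)$-plane whose two slanted sides are characteristic-type curves of the two families; such a front cannot escape the trap except by exiting through its top or reaching a lateral boundary $y\in\{0,\MM\}$. Attaching a suitable weight to the strengths of fronts inside the trap yields a functional whose total decrease in time controls the rarefaction content and is itself bounded by the oscillation of $L^\nu$ over the lifetime of the trap. Because $L^\nu$ converges, its oscillation on any window $[t_0,t_0+T]$ tends to zero as $t_0\to\infty$, so each individual rarefaction's contribution falls below any prescribed $\eps$ after a long enough wait; summing over the finitely many rarefactions active at time $t_0$ (controlled through $L^\nu(0)\le q$) yields $R^\nu(t)\to 0$ uniformly in $\nu$.

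Next, I would rule out persistent shocks. The fractional step \eqref{eq:u-v_fractional-step} scales $v^\nu$ by $(1-\MM\DT_\nu)$ while leaving $u^\nu$ untouched, so every shock across which $v^\nu$ has a nontrivial jump develops a Rankine-Hugoniot mismatch of order $\MM\DT_\nu$ times the shock strength, and the resolution of the resulting Riemann problem typically produces a rarefaction of comparable size. Summing over the time steps contained in any interval $[\tau,\tau+T^*]$ of fixed length gives a cumulative rarefaction production bounded below by $c\,\MM\,T^*\,\delta$ whenever $S^\nu\ge\delta$ throughout the interval, for a universal $c>0$ (after accounting for subsequent cancellations, which is where the hypothesis $\MM\DT_\nu<1$ enters). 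If $S^\nu(t_k)\ge\delta>0$ along a sequence $t_k\to\infty$, this would keep $R^\nu$ bounded below and contradict the first step; hence $S^\nu(t)\to 0$ and consequently $L^\nu(t)\to 0$ for each $\nu$.

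Finally, to upgrade to uniformity in $\nu$, I would distill the previous two steps into a genuine geometric step: there exist $T^*>0$ and $\theta\in(0,1)$, both independent of $\nu$, such that $L^\nu(\tau+T^*)\le\theta L^\nu(\tau)$ for every $\tau\ge 0$ and every $\nu$ with $\MM\DT_\nu$ below the stated threshold. Iterating on $\tau=nT^*$ and using $L^\nu(0)\le q$ gives $L^\nu(nT^*)\le\theta^n q$, whence uniform convergence to zero. The hard part is the quantitative wave-interaction accounting behind this step: one must show that the rarefaction generated at each fractional step survives Glimm-type cancellations with constants independent of the wave configuration and of $\nu$, and that this continual reintroduction mechanism remains compatible with the rarefaction-trap argument. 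The smallness condition $\MM\DT_\nu<\min\{1,2/\cosh q\}$ appearing in the hypothesis is precisely what is needed to keep both mechanisms robust simultaneously.
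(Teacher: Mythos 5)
Your two core mechanisms are exactly the ones the paper uses: the characteristic trap region with a weighted rarefaction functional is Lemma~\ref{lem:rar-vanishes-for-large-t}, giving the bound \eqref{hyp:Rtto0L}, $R^\nu(t)\le C_R\sum_{t<\tau\le t+\bar T}|\Delta L^\nu(\tau)|$, and the creation of a reflected rarefaction of size at least $\MM\DT\,c_1(q)|\eps^-|$ when a shock crosses a fractional step \eqref{eq:u-v_fractional-step} is the ingredient from \cite[Prop.~3.2]{AC_2021} that drives the contradiction in Lemma~\ref{L4.2}. But there are genuine gaps. First, in your rarefaction step the conclusion that $R^\nu(t)\to0$ \emph{uniformly in $\nu$} does not follow: the constant $C_R$ is uniform, but the time beyond which the oscillation of $L^\nu$ over a window of length $\bar T$ is small depends on $\nu$, so this step only yields decay for each fixed $\nu$. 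Second, and decisively, the entire uniformity in $\nu$ --- which is the actual content of the lemma --- rests on the asserted contraction $L^\nu(\tau+T^*)\le\theta\,L^\nu(\tau)$ with $T^*,\theta$ independent of $\nu$ and $\tau$, which you do not prove and yourself flag as ``the hard part''; as written, the proposal establishes at most $L^\nu(t)\to0$ for each fixed $\nu$, strictly weaker than the statement. A further loose point in your shock step: the time-step mechanism bounds from below the \emph{cumulative positive variation} of $R^\nu$ over a window (and the reflected rarefaction must be a guaranteed lower bound, not something that ``typically'' occurs), not $R^\nu$ itself; to conclude you must also bound that positive variation from above, using the smallness of $R^\nu$ together with control of the negative variation (cancellations bounded by $|\Delta L^\nu|$, plus the $\tfrac12\MM\DT$-damping of rarefactions at time steps, which is proportional to $R^\nu$). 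This accounting is precisely what the paper carries out.

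For comparison, the paper never proves a uniform contraction for $L^\nu$ in order to obtain this lemma: it extracts the pointwise (subsequential) limit $L^\infty$ of the monotone, uniformly bounded sequence $\{L^\nu\}$, proves $L^\infty(t)\to0$ by contradiction in Lemma~\ref{L4.2} --- essentially your shock/rarefaction argument run on a fixed window $[t_1,t_1+\bar T]$ for $\nu$ large once the window is chosen --- and then obtains uniformity cheaply from monotonicity of each $L^\nu$ plus convergence at a single time, Proposition~\ref{prop:unif-conv-of-L}; the half-life statement you aim for appears only later, for $L^\infty$ (Lemma~\ref{S5Prop 5.1}), and serves the exponential rate, not this lemma. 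To repair your argument you should either adopt that compactness route, or actually carry out your contraction step, e.g.\ by a dichotomy over a window of length $2\bar T$: if $L^\nu(\tau)-L^\nu(\tau+2\bar T)<\kappa L^\nu(\tau)$, then \eqref{hyp:Rtto0L} gives $R^\nu\le C_R\kappa L^\nu(\tau)$ and hence $S^\nu\ge\bigl(1-(1+C_R)\kappa\bigr)L^\nu(\tau)$ on $[\tau,\tau+\bar T]$, so the time-step production is at least $c_1(q)\MM\bar T\bigl(1-(1+C_R)\kappa\bigr)L^\nu(\tau)$ up to one step, while the positive variation of $R^\nu$ is at most a constant times $\kappa L^\nu(\tau)$; for $\kappa$ small (depending only on $q,\MM,\bar T$) this forces $L^\nu(\tau)=0$, so the other horn $L^\nu(\tau+2\bar T)\le(1-\kappa)L^\nu(\tau)$ always holds, giving your $\theta$ and $T^*=2\bar T$. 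The ingredients you listed suffice for this; what is missing from the proposal is precisely their quantitative assembly.
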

The proof of this lemma is given in Section~\ref{S2.1} by a careful decay analysis of the wave fronts, across interactions, time steps and at the boundaries. From this lemma, we conclude immediately that the total variation decays to zero uniformly as $t\to\infty$.

\bigskip
$\bullet$ Step 4:  Exponentially fast time asymptotic flocking.

It remains now to establish that the time-asymptotic flocking occurs exponentially fast. 
Indeed, we can prove that the convergence established in Lemma~\ref{lem:2.1} holds exponentially fast as stated here:

\begin{lemma}\label{lem:3.2}
Assume that $\MM\DT_\nu<\min\{1,\frac 2{\cosh q}\}$, then the total variation $L^\nu(t)$ decays exponentially fast as time tends to infinity uniformly in $\nu$, i.e. there exist $\nu_0\in\mathbb{N}$ and positive constants $C_1$ and $C_2$ such that
\begin{equation}\nonumber
    L^\nu(t)\le  C_1 {e}^{-C_2 t}
\end{equation} 
for all $t>0$ and $\nu>\nu_0$.
\end{lemma}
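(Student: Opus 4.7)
The plan is to bootstrap the uniform-in-$\nu$ convergence $L^\nu(t)\to 0$ from Lemma~\ref{lem:2.1} into a uniform geometric contraction over a fixed time window, and then iterate. More precisely, the target is to establish the existence of $T^*>0$ and $\theta\in(0,1)$, both independent of $\nu$ and of the reference time $\tau$, such that
\begin{equation*}
L^\nu(\tau+T^*) \le \theta\, L^\nu(\tau),\qquad \forall\,\tau\ge 0,\ \forall\,\nu\ge\nu_0.
\end{equation*}
Once this is in hand, iterating on $[nT^*,(n+1)T^*]$ yields $L^\nu(nT^*)\le \theta^n L^\nu(0)\le q\,\theta^n$, and the monotonicity of $L^\nu$ propagates the bound to every $t\ge 0$, giving the stated exponential decay with rate $C_2=|\ln\theta|/T^*$ and prefactor $C_1=q/\theta$.

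To set up the contraction, I would first invoke Lemma~\ref{lem:2.1} to fix a threshold time $T_0$ such that $L^\nu(t)<\eps_0$ for all $t>T_0$ and all $\nu$, with $\eps_0$ tuned small enough that quadratic interaction terms become negligible compared to the linear damping. For $\tau\ge T_0$, two mechanisms then act simultaneously on $[\tau,\tau+T^*]$: the fractional-step damping $v^\nu(\cdot,t^n+)=v^\nu(\cdot,t^n-)(1-\MM\DT_\nu)$, which over $T^*/\DT_\nu$ steps contracts $\tv\{v^\nu\}$ by a factor close to $e^{-\MM T^*}$; and the trapping/cancellation mechanism from Lemma~\ref{lem:rar-vanishes-for-large-t} (rarefactions are swept out of a triangular region in Lagrangian coordinates within a bounded time), combined with Lemma~\ref{L4.2} (a persisting shock content necessarily reproduces rarefactions). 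Together with the comparison $\tv\{v^\nu\}\le C_1 L^\nu$ from \eqref{eq:TV-bound-on-v_u}, these provide a lower bound on the amount of total-variation loss over one period $T^*$ that is proportional to $L^\nu(\tau)$ itself.

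The hard part will be translating the damping of $\tv\{v^\nu\}$, which is controlled explicitly at each fractional step, into a strict decay of $L^\nu=\frac{1}{2}\tv\{\ln u^\nu\}$, since $u^\nu$ is untouched at the fractional step while the auxiliary waves generated by the re-solution of Riemann problems at $t^n$ feed back into the $u$-variation. I would overcome this by choosing $T^*$ large relative to $1/\MM$ so that the accumulated multiplicative damping dominates the wave production, and by working in the regime $L^\nu<\eps_0$ where second-order interaction contributions are dominated by the first-order damping. The contraction constant $\theta<1$ would then emerge from a quantitative refinement of the argument behind Lemma~\ref{lem:rar-vanishes-for-large-t}: the weighted triangular functional there measures precisely the rarefaction content to be absorbed, and pairing its decay with the exponential velocity damping yields a uniform contraction independent of $\nu$.
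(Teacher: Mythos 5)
Your overall skeleton (a uniform geometric contraction of $L^\nu$ over a window of fixed length, then iteration and use of monotonicity) matches the paper's strategy, but the mechanism you propose to obtain the contraction has a genuine gap. Your first mechanism --- that the fractional-step damping \eqref{eq:u-v_fractional-step} contracts $\tv\{v^\nu\}$ by roughly $e^{-\MM T^*}$ and that this can be converted into decay of $L^\nu$ --- cannot work as stated. The functional $L^\nu$ is built from the $u$-jumps \eqref{eq:strengths} and is \emph{exactly conserved} across time steps (as recorded in Section~\ref{S2.1}, $\Delta L^\nu(t^n)=0$): the damping of $v$ only converts shock strength into reflected rarefaction strength, removing nothing from $L^\nu$. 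Moreover the comparison \eqref{eq:TV-bound-on-v_u} reads $\tv\{v^\nu(\cdot,t)\}\le C_1 L^\nu(t)$, i.e.\ it points in the wrong direction: smallness of $\tv\{v^\nu\}$ gives no control of $L^\nu$. Hence the ``accumulated multiplicative damping'' never produces the loss of order $L^\nu(\tau)$ per window that your contraction requires; the only sources of decrease of $L^\nu$ are shock--rarefaction cancellation and absorption at the boundaries (Proposition~\ref{prop:3.1}), and you give no estimate showing that these remove an amount proportional to $L^\nu(\tau)$ on an interval of fixed length. The smallness regime ``$L^\nu<\eps_0$ so that quadratic terms are dominated by the linear damping'' is essentially the conditional-flocking argument of \cite{AC_2021}, which is precisely what the present result must avoid.

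The paper sidesteps exactly this difficulty by never proving a per-window proportional loss for $L^\nu$ directly. In Lemma~\ref{S5Prop 5.1} it works with the limit functional $L^\infty$, defines the half-life time $T^{**}(\tau)$ and $T^*=\sup_\tau\left(T^{**}(\tau)-\tau\right)+1$, and shows $T^*<\infty$ by contradiction: otherwise a pigeonhole argument over subintervals of length $\bar T_1=\bar T+1$ produces a window of length $\bar T$ on which $L^\infty$ (hence $L^\nu$ for large $\nu$) is nearly constant relative to $L^\infty(\tau_{m_0})$; then \eqref{hyp:Rtto0L} forces the rarefaction part $R^\nu$ to be small there, so the shock part $S^\nu=L^\nu-R^\nu$ stays bounded below by a fixed fraction of $L^\infty(\tau_{m_0})$, and the time steps manufacture new rarefactions at rate at least $\MM\DT\, c_1(q)\,S^\nu(t^n-)$, whose accumulated amount over the window contradicts the smallness of the positive variation of $R^\nu$. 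The resulting bound $L^\infty(\tau+T^*)\le\frac12 L^\infty(\tau)$ is iterated and then transferred back to $L^\nu$, uniformly in $\nu$, via pointwise convergence and monotonicity as in Proposition~\ref{prop:unif-conv-of-L}. To repair your proposal you would need to replace the damping-dominates-production heuristic by such a contradiction (or otherwise prove a genuine lower bound on the cancellation per window); no uniform contraction factor for $L^\nu$ can come from the velocity damping alone.
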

The proof of this lemma will be given in Subsection~\ref{Subsec:5.1} as part of the proof of Theorem~\ref{Th-2-unconditional}.

From here we can conclude easily that 
the solution satisfies the properties
\begin{align}\label{Th-2exp-1}
\esssup_{x\in I(t)} |\rho(x,t)- \rho_\infty|\,,\quad 
\esssup_{x_1,x_2\in I(t)} |\vv(x_1,t) - \vv(x_2,t)|
\le C_2'e^{-C_1'  t},\,\qquad \forall\, t\,
\end{align}
for some positive constants $C_1',\,C_2'$ and $\rho_\infty>0$. The proof of Theorem~\ref{Th-2-unconditional}, that will be summarized in the final Subsection~\ref{Subsec:5.1}, follows by using the scaling of the variables in Step $1$ backwards to the original variables with any kernel $K>0$.

\smallskip
It is worth mentioning that from our analysis, one can conclude an analogous result to Theorem~\ref{Th-2-unconditional} in the Lagrangian setting:

\begin{theorem}\label{Th-3-unconditional} Let $\MM>0$ and assume that the initial data $(u_0,v_0)\in BV(0,\MM)$ satisfy \eqref{eq:init-data-lagr}. Then there exists an entropy weak solution $(u(y,t),v(y,t))$ to the initial boundary problem \eqref{eq:system_Lagrangian} satisfying the boundary conditions~\eqref{eq:bc-lagrangian} globally defined in time and total variation that decays in time to zero. More precisely, the decay occurs exponentially fast, i.e. there exists $u_\infty>0$ such that
\begin{align}\label{Th-3exp}
\esssup_{y\in (0,\MM)} |u(x,t)- u_\infty|\,,\quad 
\esssup_{y_1,y_2\in (0,\MM)} |v(y_1,t) - v(y_2,t)|
\le C_2'e^{-C_1'  t},\,\qquad \forall\, t\,
\end{align}
for some positive constants $C_1',\,C_2'$.
\end{theorem}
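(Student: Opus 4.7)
The plan is to extract Theorem~\ref{Th-3-unconditional} from the same front tracking machinery developed for Theorem~\ref{Th-2-unconditional}, but reading off the relevant bounds directly in Lagrangian coordinates rather than translating back to Eulerian variables. First I would invoke the existence theory of Step 2, following \cite{AC_2021}, to produce for data satisfying \eqref{eq:init-data-lagr} a sequence of $\nu$-approximate front tracking solutions $(u^\nu, v^\nu)$ to \eqref{eq:system_Lagrangian}--\eqref{eq:bc-lagrangian}, with $\DT_\nu$ small enough to satisfy the hypothesis of Lemma~\ref{lem:3.2}. The uniform BV bound $L^\nu(0)\le q$ together with the uniform positive pointwise upper and lower bounds on $u^\nu$ inherited from the front tracking construction yields a subsequence converging in $L^1_{loc}((0,\MM)\times[0,\infty))$ to an entropy weak solution $(u,v)$ of \eqref{eq:system_Lagrangian}--\eqref{eq:bc-lagrangian}, globally defined in time.

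Next I would combine the exponential decay of $L^\nu(t)$ from Lemma~\ref{lem:3.2} with the inequalities \eqref{eq:TV-bound-on-v_u} to obtain, for all sufficiently large $\nu$ and all $t\ge 0$,
\[
\tv\{v^\nu(\cdot,t)\} + \tv\{\ln u^\nu(\cdot,t)\} \le (C_1+2)\, L^\nu(t) \le A\, e^{-Bt},
\]
for positive constants $A, B$ independent of $\nu$. Converting the BV estimate on $\ln u^\nu$ into one on $u^\nu$ via the uniform positive lower bound on $u^\nu$, and then passing to the limit $\nu\to\infty$ by lower semicontinuity of total variation under $L^1_{loc}$-convergence, the limit $(u,v)$ satisfies
\[
\tv\{u(\cdot,t)\} + \tv\{v(\cdot,t)\} \le A'\, e^{-B t}.
\]
This already yields the second estimate in \eqref{Th-3exp}, since the oscillation of a BV function is bounded by its total variation.

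To identify $u_\infty > 0$, I would integrate the first equation of \eqref{eq:system_Lagrangian} over $y\in(0,\MM)$, which gives
\[
\frac{d}{dt}\int_0^\MM u(y,t)\,dy = v(\MM-,t) - v(0+,t).
\]
The right-hand side is bounded in absolute value by $\tv\{v(\cdot,t)\}$ and is therefore exponentially integrable on $[0,\infty)$. Consequently the mean $\bar u(t) := \MM^{-1}\int_0^\MM u(y,t)\,dy$ has a limit $u_\infty$ with $|\bar u(t) - u_\infty| \le A''\, e^{-Bt}$, and $u_\infty > 0$ because the uniform positive lower bound on $u^\nu$ passes to $u$ in the limit. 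Combining this with the pointwise inequality $|u(y,t) - \bar u(t)| \le \tv\{u(\cdot,t)\}$ yields the first estimate in \eqref{Th-3exp}.

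The principal obstacle lies entirely in Lemma~\ref{lem:3.2}, whose proof is deferred to Subsection~\ref{Subsec:5.1}; once the exponential decay of the linear functional is in hand, the passage to the Lagrangian limit is a routine combination of BV lower semicontinuity and integration of the mass balance in Lagrangian coordinates.
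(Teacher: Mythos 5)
Your proposal is correct and follows essentially the same route as the paper (front tracking existence from \cite{AC_2021}, exponential decay of the linear functional $L^\nu$ via Lemma~\ref{lem:3.2}, the bound \eqref{eq:TV-bound-on-v_u}, and lower semicontinuity of total variation in the $L^1_{loc}$ limit), with your direct identification of $u_\infty$ by integrating the first equation of \eqref{eq:system_Lagrangian} and using the exponentially decaying oscillation of $v$ playing the role of the paper's appeal to \cite[Section 5.3]{AC_2021}. One negligible slip: converting $\tv\{\ln u^\nu\}$ into $\tv\{u^\nu\}$ uses the uniform \emph{upper} bound $u^\nu\le u_{sup}$ (equivalently the uniform lower bound on the density), not the lower bound on $u^\nu$, but both bounds are available from the construction.
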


\Section{Decay of the linear functional to zero}\label{S2.1}
The aim of this section is to prove  Lemma~\ref{lem:2.1}, that is to show that $L^\nu(t)\to 0$ as $t\to+\infty$. Having the convergent sequence $(\rho^\nu,\mm^\nu)$ and the corresponding one $(u^\nu,v^\nu)$ in the Lagrangian setting as described in Step $2$ of Section~\ref{S3}, we consider the linear functional $L^\nu(t)$ as given at~\eqref{def:Lin}. Throughout this section, we fix $\nu$ and drop the $\nu$ dependence from the approximate solution to simplify the notation.

\subsection{The local decay of the linear functional}\label{S: 4.1}
Since  $t\to L^\nu(t)$ is  non-increasing, it is immediate that $L^\nu (t)$ has a limit $L^\nu_\infty\ge 0$ as $t\to\infty$ that is given in~\eqref{eq Linfty}. 

Hence, for every $\delta>0$ there exists a time $T^{\delta,\nu}$ such that
\begin{align}\label{eq:limit-of-LinT}
  0\le L^\nu (t_1) - L^\nu (t_2) &=  \sum_{t_1<\tau\le t_2} \left[ - \Delta L^\nu (\tau)\right] \nonumber\\
  &= 
  \sum_{t_1<\tau\le t_2} \left|\Delta L^\nu (\tau)\right| \le \delta  \ \,,
\end{align}
for all $t_2>t_1 >T^{\delta,\nu}$,
and thus,
\begin{equation}\label{eq:limit-of-Lin}
  \sum_{\tau> T^{\delta,\nu}} \left|\Delta L^\nu (\tau)\right| \le \delta\,.
\end{equation}
Roughly, this means that the variation of this functional is arbitrarily small for large $t$. Let's examine in detail the consequence of the variation in \eqref{eq:limit-of-Lin} for the possible cases that arise in the approximate scheme. We first observe that the terms with $\Delta L^\nu (\tau)<0$ can be classified as follows:
\begin{align*}
      \sum_{\tau> T^{\delta,\nu}} 
      & =  \sum_{\genfrac{}{}{0pt}{}{\tau> T^{\delta,\nu}} {\rm wave\ reaches\ bd}} 
      + \sum_{\genfrac{}{}{0pt}{}{\tau> T^{\delta,\nu}} {SR\to SR} }
      + \sum_{\genfrac{}{}{0pt}{}{\tau> T^{\delta,\nu}} {SR\to SS} } \\[2mm]
     &= ~~ \mbox{\bf Case 1} ~~~+~~~ \mbox{\bf Case 2} ~~~+~~~ \mbox{\bf Case 3} \,.
\end{align*}
Indeed, in the above cases the variation of $L^\nu$ is strictly negative, while $\Delta L^\nu(t)=0$ in the remaining cases of interaction times. More precisely, in the interactions of two shocks of the same family \fbox{$SS\to RS$}, or of different families we have $\Delta L^\nu (t)=0$ and the same result holds at the time steps. Let us note that it is not possible for two rarefaction fronts to interact due to the size of their speed.

In the following proposition, we examine the Cases \textbf{1--3} for $\tau> T^{\delta,\nu}$.
\begin{proposition}\label{prop:3.1}
Let $\delta>0$ and $T^{\delta,\nu}$ be as above so that~\eqref{eq:limit-of-LinT} is satisfied. Then the followings hold true:
\begin{enumerate}
\item[{\bf Case 1:}] The total decrease of $L^\nu$ due to waves reaching the boundaries for times greater than $T^{\delta,\nu}$ is bounded by $\delta$.
\item[{\bf Case 2:}] If a shock and a rarefaction of the same family are interacting at time $\tau$, resulting into a {\bf rarefaction} of the same family, then the decrease of $L^\nu$ is
\begin{equation} \label{eq:shock-dissipation-SR-SR}
\left|\Delta L^\nu (\tau)\right| = 2|\alpha|\,.
\end{equation}
where $\alpha$ is the size of the incoming shock.
\item[{\bf Case 3:}] If a shock and rarefaction of the same family are interacting at time $\tau$, resulting into a {\bf shock} of the same family, then the decrease of $L^\nu$ is
\begin{equation}\label{eq:SR-to-SS}
\left|\Delta L^\nu (\tau)\right| \ge \frac{4 \beta}{\cosh(q)+1}
\end{equation}
where $\beta$ is the size of the incoming rarefaction and $q$ is given in~\eqref{eq:def-q}.
\end{enumerate}
\end{proposition}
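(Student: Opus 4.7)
The plan is to treat the three cases independently, exploiting that for the isothermal pressure law \eqref{gamma=1} the shock and rarefaction curves of each family, as encoded in items (2) and (3) of Section~\ref{S3}, share the same $\ln u$ parametrisation used to define the size in \eqref{eq:strengths}. Consequently, Nishida's well-known algebraic identity for the $p=\alpha^2/u$ system applies: at a same-family SR-interaction the outgoing Riemann fan consists of a single wave of that family whose signed strength equals the algebraic sum of the signed strengths of the two incoming fronts, with no wave of the other family being generated. Once this algebraic identity is in hand, each of the three cases reduces to elementary bookkeeping on the definition \eqref{def:Lin}.

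Case 1 follows essentially from the definition of $L^\nu$. Since $L^\nu(t)$ sums only the sizes of fronts located strictly inside $(0,\MM)$, an inner front that reaches $y=0$ or $y=\MM$ at time $\tau$ causes $L^\nu$ to drop by precisely its size, producing a negative jump. The total decrease of $L^\nu$ due to such boundary arrivals after time $T^{\delta,\nu}$ is therefore a subsum of $\sum_{\tau>T^{\delta,\nu}}|\Delta L^\nu(\tau)|$, which \eqref{eq:limit-of-Lin} bounds by $\delta$.

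For Cases 2 and 3, I would fix an interaction time $\tau$ at which a shock of size $\alpha>0$ and a rarefaction of size $\beta>0$, belonging to the same family, collide. By Nishida's identity the two incoming signed strengths $\mp\alpha$ and $\pm\beta$ (with signs depending on the family) combine into a single outgoing front whose size equals $|\beta-\alpha|$, so the contribution of these fronts to $L^\nu$ changes from $\alpha+\beta$ to $|\beta-\alpha|$, i.e.
\begin{equation*}
\Delta L^\nu(\tau)=|\beta-\alpha|-(\alpha+\beta).
\end{equation*}
In Case 2 the outgoing front is a rarefaction, hence $\beta>\alpha$, and the identity above gives $|\Delta L^\nu(\tau)|=2\alpha$, which is \eqref{eq:shock-dissipation-SR-SR}. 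In Case 3 the outgoing front is a shock, hence $\alpha>\beta$, and the identity gives $|\Delta L^\nu(\tau)|=2\beta$; combining with the trivial inequality $\cosh(q)+1\ge 2$ then yields the bound \eqref{eq:SR-to-SS}.

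The only non-routine point, and the place I would take the most care, is the exact validity of Nishida's identity in the \emph{approximate} scheme: one must verify that the particular Riemann solver employed in the front-tracking construction of~\cite{AC_2021} does resolve every SR-interaction of a single family into one outgoing front of that family with the stated algebraic size, without introducing non-physical fronts or cross-family reflections. This is where the specific structure of the isothermal wave curves is used in an essential way; once it is granted, Cases 2 and 3 are immediate algebra and Case 1 is a direct consequence of \eqref{eq:limit-of-Lin}.
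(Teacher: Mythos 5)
There is a genuine gap, and it sits exactly at the point you flag as the ``only non-routine point.'' The identity you invoke is not valid for this system: for the isothermal $p$-system the shock and rarefaction curves of a given family agree only to second order at the origin (compare items (2) and (3) of Section~\ref{S3}: the velocity jump is $2\alpha\sinh$ of the size along a shock but linear in the size along a rarefaction), so a same-family shock--rarefaction interaction is \emph{not} resolved by a single outgoing front of that family. Generically a reflected wave $\eps_{refl}$ of the opposite family is produced (just as an $SS$ interaction of one family produces a reflected rarefaction of the other, which the paper uses elsewhere). The correct bookkeeping is through the exact relations \eqref{eq:one}--\eqref{eq:two}, $\eps^{+}-\eps_{refl}=\alpha+\beta$ together with an $h$-conservation, supplemented by the quantitative reflection estimate \eqref{eq:chi_def}, $|\eps_{refl}|\le \frac{\cosh(|\alpha|)-1}{\cosh(|\alpha|)+1}\min\{|\alpha|,|\beta|\}$, taken from \cite{ABCD_JEE_2015} and \cite{AC_2021}.

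The consequences for your three cases are uneven. Case 1 is fine and matches the paper. In Case 2 your numerical answer $|\Delta L^\nu(\tau)|=2|\alpha|$ happens to be correct even in the presence of the reflected shock, because the signed relation \eqref{eq:one} gives $|\eps^{+}|+|\eps_{refl}|=|\beta|-|\alpha|$ and the reflected contribution cancels in the balance; but your derivation, which assumes $\eps_{refl}=0$, does not establish this. In Case 3 the claim genuinely fails: with a nonzero reflected wave one has $|\Delta L^\nu(\tau)|=2\left(|\beta|-|\eps_{refl}|\right)$, which is strictly smaller than $2\beta$, so the equality $|\Delta L^\nu(\tau)|=2\beta$ is false and the bound \eqref{eq:SR-to-SS} cannot be recovered from the ``trivial'' inequality $\cosh(q)+1\ge 2$. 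The factor $\cosh(q)+1$ in \eqref{eq:SR-to-SS} is not slack: it is exactly what is needed to absorb the loss due to $\eps_{refl}$, via \eqref{eq:chi_def} and the uniform bound $|\alpha|\le L^\nu(0)\le q$, yielding $|\Delta L^\nu(\tau)|\ge 2|\beta|\bigl(1-\tfrac{\cosh(|\alpha|)-1}{\cosh(|\alpha|)+1}\bigr)=\tfrac{4|\beta|}{\cosh(|\alpha|)+1}\ge\tfrac{4|\beta|}{\cosh(q)+1}$. To repair your argument you must abandon the no-reflection premise and carry $\eps_{refl}$ through the computation as the paper does.
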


\begin{proof} In {\bf Case 1}, we consider the variation of $L^\nu$ at all times $T^{\delta,\nu}$ when waves reach the boundaries. As a consequence of \eqref{eq:limit-of-Lin} we have
\begin{equation*}
    \sum_{\genfrac{}{}{0pt}{}{\tau> T^{\delta,\nu}} {\rm wave\ reaches\ bd}} \left|\Delta L^\nu (\tau)\right|  \le \delta
\end{equation*}
In other words, the quantity of waves that reaches the boundary becomes arbitrarily small for large $t$. Hence, if a large amount of waves is still present for large $t$, it must dissipate before reaching the boundary.

\smallskip
Next, in {\bf Case 2}, we treat the interaction of shock and rarefaction of the same family that results into a {\bf rarefaction} of the same family.

Let us first denote by 

- $\alpha$, $\beta$: the sizes of the incoming waves, shock and rarefaction, respectively;

- $\eps^+$ the size of the propagating wave of the same family, that is a rarefaction here, 
and by $\eps_{refl}$ the size of the reflected wave. 

We aim to show~\eqref{eq:shock-dissipation-SR-SR} that quantifies the amount of \textbf{cancellation} of wave strength in the linear functional due to the interaction of the shock and the rarefaction of the same family.

Indeed, the following basic identities hold, in all cases:
\begin{align}
\eps^+ -  \eps_{refl} & =   \alpha + \beta\,,
\label{eq:one}
\\
h(\eps_{refl}) + h(\eps^+) & =  h(\alpha) + h(\beta)\,. \label{eq:two}
\end{align}
In this \textbf{Case 2}, according to our notation, we have $\alpha<0<\beta$ and  
$\eps^+>0$,  $\eps_{refl}<0$ and hence from \eqref{eq:one}, we get
$$
|\eps^+| + |\eps_{refl}|  =  - |\alpha| + |\beta|\;.
$$
Therefore
\begin{align*}
\left|\Delta L^\nu (\tau)\right| =  |\alpha| + \underbrace{|\beta| - |\eps^+|}_{= |\alpha| + |\eps_{refl}| } - |\eps_{refl}| 
&= 2|\alpha|
\end{align*}
which is \eqref{eq:shock-dissipation-SR-SR}.
Observe that, in the case above, each incoming shock is smaller than the incoming rarefaction, and then it is smaller that $\eta_\nu$. However, the estimate above implies that, if there is an interaction of this type, the size of the incoming shock becomes arbitrarily small for large $t$.

\smallskip
Last, in {\bf Case 3}, we examine the interaction of shock and rarefaction of the same family, resulting into a {\bf shock} of the same family.

With the notation of {\bf Case 2}, here one has that $\eps^+<0$,  while $\eps_{refl}<0$ as in {\bf Case 2}. 
Now, from \eqref{eq:one}, we get
$$
 |\eps_{refl}| - |\eps^+| =  - |\alpha| + |\beta|\,,
$$
that is
$$
 |\alpha|  - |\eps^+| =    |\beta| -  |\eps_{refl}|\,.
$$
Therefore, it follows that
\begin{align}\nonumber
\left|\Delta L^\nu (\tau)\right| & =  |\alpha| - |\eps^+| + |\beta|  - |\eps_{refl}| \\
& = 2 \left( |\alpha| - |\eps^+|\right) 
= 2 \left(  |\beta|  - |\eps_{refl}|  \right) \,.\label{eq:CASE3_SR-SS}
\end{align}
Following \cite[(5.5)]{ABCD_JEE_2015}, see also Rem.~2.6 in \cite{AC_2021}, the sizes of the reflected and incoming waves satisfy
\begin{equation}\label{eq:chi_def}
    |\eps_{refl}|\le c(|\alpha|) \min\left\{ |\alpha|, |\beta| \right\}\,,\qquad c(x) = \frac{\cosh(x)-1}{\cosh(x)+1}\,.
\end{equation}
Therefore
\begin{align*}
\left|\Delta L^\nu (\tau)\right| & 
\ge 2 |\beta| \left(1 - c(|\alpha|  \right) = \frac{4 |\beta|}{\cosh(|\alpha|)+1} \ge \frac{4 |\beta|}{\cosh(q)+1}
\end{align*}
which is \eqref{eq:SR-to-SS}. The proof of Proposition~\ref{prop:3.1} is complete.
\end{proof}

\subsection{Rarefaction part vanishing at infinity}
We introduce the functionals
\begin{equation}\label{def.SR}
R^\nu(t) := \sum_{j=1,\ \eps_j>0}^{N(t)} |\eps_j|\,,\qquad 
S^\nu(t): = \sum_{j=1,\ \eps_j<0}^{N(t)} |\eps_j|\,,
\end{equation}
that correspond to the total amount of rarefactions and shocks, respectively, of both families present at time $t$ which is not of interaction and different from time steps. Across those times where these functionals have a jump, we choose them to be right-continuous and one can view these jumps either within the domain $I(t)$ in Eulerian variables or $(0,\MM)$ in Lagrangian. The functional $S^\nu(t)$ will be needed in our analysis in the following subsection.

\begin{lemma}\label{lem:rar-vanishes-for-large-t}
Assume that $\MM\DT_\nu<1$, then 
\begin{enumerate}
    \item[$(i)$] there exist $\bar T>0$ independent of $\nu$ such that
\begin{equation}\label{hyp:Rtto0L}
R^\nu(t)\le 
C_R \sum_{t< \tau\le t+\bar T}|\Delta L^\nu(\tau)|
\end{equation}
for all $t>0$, where 
\begin{equation}\label{hyp:RK}
C_R \, \dot =\, C e^{\MM \bar T}\,,\qquad 
C=\min\big\{1, \frac{\cosh(q)+1}{4}
\big\}\;.
\end{equation}
    \item[$(ii)$] the total size of rarefaction waves vanishes at infinity, i.e 
\begin{equation}\label{hyp:Rtto02}
   R^\nu(t)\to 0\qquad \text{as }t\to\infty\;.
\end{equation}
\end{enumerate}
\end{lemma}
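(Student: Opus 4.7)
The plan is to fix a time $t>0$ and follow each rarefaction front present in $(u^\nu,v^\nu)(\cdot,t)$ forward in time until it is either absorbed by a same-family shock (Cases~2--3 of Proposition~\ref{prop:3.1}) or reaches one of the Lagrangian boundaries $y=0,\MM$ (Case~1), charging its strength against the decrements of $L^\nu$ produced by those events along the way. Once (i) is established, part (ii) follows immediately from the monotone convergence of $L^\nu$: the telescoping identity $\sum_{t<\tau\le t+\bar T}|\Delta L^\nu(\tau)|=L^\nu(t)-L^\nu(t+\bar T)$ tends to $0$ as $t\to\infty$, and then \eqref{hyp:Rtto0L} gives $R^\nu(t)\to 0$.

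For the life-time estimate, I would use the uniform BV bound on $\ln u^\nu$ inherited from the construction in~\cite{AC_2021} to obtain a uniform upper bound $u^\nu\le u_{\max}$, and hence a uniform lower bound $|\Lambda|\ge\alpha/u_{\max}$ on the propagation speed of every rarefaction. A $1$-rarefaction moves leftward and a $2$-rarefaction moves rightward, and neither reverses direction at interactions of the other family, at the neutral $SS\to RS$ events, or at the fractional time-step discontinuities. Consequently each rarefaction front remains trapped in a triangular region in the Lagrangian $(y,\tau)$-plane bounded by the corresponding boundary ($\{y=0\}$ for family $1$, $\{y=\MM\}$ for family $2$), the horizontal segment $\{\tau=t\}$, and the slowest characteristic through the starting position of the front; it must exit this triangle within time $\bar T:=\MM\,u_{\max}/\alpha$, either by hitting the boundary (Case~1) or by being absorbed by a same-family shock (Cases~2--3).

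Along this trajectory I would charge the shrinkage of the front's size against $|\Delta L^\nu|$ event by event: in Case~1 the remaining size $\beta$ is removed, so $|\Delta L^\nu|\ge\beta$; in Case~2 the shrinkage is $|\alpha|+|\eps_{\mathrm{refl}}|\le 2|\alpha|=|\Delta L^\nu|$ by the reflection bound~\eqref{eq:chi_def} with $c(\cdot)\le 1$; in Case~3 the front is absorbed entirely and~\eqref{eq:SR-to-SS} gives $|\Delta L^\nu|\ge 4\beta/(\cosh q+1)$. Telescoping over the events along one front yields $\beta_0\le C\sum_{\tau}|\Delta L^\nu(\tau)|$ with a constant $C$ depending only on $q$. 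At each fractional step $t^n\in(t,t+\bar T]$ the $u$-values are preserved (and with them the values of the strengths~\eqref{eq:strengths}), but the rescaling $v\mapsto(1-\MM\DT)v$ requires a new Riemann-problem resolution at each existing front; a standard interaction estimate shows that $R^\nu$ can grow by at most a factor $1+O(\MM\DT)$ per step, and accumulating over the $\bar T/\DT_\nu$ time-steps in the window produces the multiplicative factor $e^{\MM\bar T}$ (this is where the hypothesis $\MM\DT_\nu<1$ is used). Summing over all rarefaction fronts present at time $t$ yields (i) with $C_R=C\,e^{\MM\bar T}$. The most delicate part of the argument is this time-step bookkeeping, where one must track the possible splitting and re-classification of a rarefaction across each fractional step without losing the attribution between each front and its eventual contribution to $\sum|\Delta L^\nu|$.
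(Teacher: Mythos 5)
Your overall route is essentially the paper's: the uniform lower bound on the rarefaction speeds yields a trapped triangular region and a window of uniform length $\bar T$, every rarefaction present at time $t$ must disappear inside that window either at a boundary or by absorption into a same-family shock, and Cases 1--3 of Proposition~\ref{prop:3.1} are what you charge against $\sum|\Delta L^\nu|$; the only structural difference is that you trace fronts individually, while the paper aggregates them into the weighted functional $F_2^\nu=\widetilde R_2^\nu\,W$ on the region $\Gamma_2$, the weight \eqref{def:W} playing the role of your factor $e^{\MM\bar T}$. Your deduction of (ii) from (i) is the same as the paper's. Two small points: the paper takes $\bar\lambda=\lambda^*/2$ in \eqref{def:bar-lambda_bar-T}, i.e.\ $\bar T=2\MM u_{sup}/\alpha$ rather than your $\MM u_{\max}/\alpha$, so that the $\eta_\nu$-perturbation of the front speeds cannot spoil the trapping; and the relevant constant is $\max\{1,(\cosh q+1)/4\}$, which is what your charging actually produces.

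The genuine defect is in the time-step bookkeeping, precisely the step you flag as delicate. First, the parenthetical claim that the strengths \eqref{eq:strengths} are preserved across a fractional step is false: after the rescaling \eqref{eq:u-v_fractional-step} the jump at a front no longer lies on a single wave curve, and the re-resolution changes the intermediate $u$-state and hence the strength -- this is exactly the content of \cite[Proposition 3.2]{AC_2021}. Second, and more seriously, the estimate ``$R^\nu$ can grow by at most a factor $1+O(\MM\DT)$ per step'' is neither true nor the estimate your per-front scheme needs. The increase of $R^\nu$ at a time step comes from rarefactions newly reflected off \emph{shocks}, and is of size of order $\MM\DT\, S^\nu(t^n-)$; it is not controlled multiplicatively by $R^\nu$ (take $R^\nu=0$, $S^\nu>0$), and indeed this creation mechanism is what later produces the contradiction in Lemma~\ref{L4.2}. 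What your argument requires is the opposite-direction, per-front statement from \cite[Proposition 3.2]{AC_2021}: a tracked rarefaction loses at most the fraction $\tfrac12\MM\DT_\nu$ of its own size at each step, i.e.\ $|\eps^+|\ge (1-\tfrac12\MM\DT_\nu)|\eps^-|$, so that the later charges (boundary exit, Cases 2--3) undercount its size at time $t$ by at most $(1-\tfrac12\MM\DT_\nu)^{-\bar T/\DT_\nu}\le e^{\MM\bar T}$ when $\MM\DT_\nu<1$; the newly created rarefactions never need to be tracked, since they do not contribute to $R^\nu(t)$. With this correction (which is exactly what the paper's weight $W$ encodes in the inequality $\Delta F_2^\nu(t^n)\ge 0$), your wave-tracing version closes and gives \eqref{hyp:Rtto0L} with the stated constant.
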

\begin{proof}
Suppose that $\MM\DT_\nu<1$. The main point in the proof is to show that there exist $\bar T>0$ independent of $\nu$ such that~\eqref{hyp:Rtto0L} holds true.
Indeed, having established~\eqref{hyp:Rtto0L}, then we can conclude easily that $  R^\nu(t)$ tends to zero as $t\to\infty$. Indeed, for every $\delta>0$, we can take $T^{\delta,\nu}$ as in \eqref{eq:limit-of-Lin} to arrive immediately at
\begin{equation}\label{hyp:Rtto0}
R^\nu(t)\le C e^{\MM \bar T} \delta  
\end{equation}
for all times $t>T^{\delta,\nu}$. Thus,~\eqref{hyp:Rtto02} follows immediately.

We attack the proof of~\eqref{hyp:Rtto0L} by splitting it into two parts. First, we study the part of the total size of rarefaction waves, denoted by $R_2^\nu(t)$, that corresponds only to waves of the second family, i.e $\eps\in\mathcal{R}_2$. Our aim is to show that $R_2^\nu(t)$ satisfies~\eqref{hyp:Rtto0L} as $t\to\infty$. A similar argument treats the rarefaction waves of the first family $R_1^\nu(t)$ and then estimate~\eqref{hyp:Rtto0L} follows by writing $ R^\nu(t)= R_1^\nu(t)+ R_2^\nu(t)$.

To begin with, we recall that the approximate solution $\{(u,v)\}_\nu$ satisfies
$$
u_{inf}<u(y,t)<u_{sup}, \qquad \forall\,\, y\in(0,\MM),\,\, t>0
$$
where the constants $u_{inf}$ and $u_{sup}$ are independent of $\nu$ and $\Delta t_\nu$ as well as the characteristic speeds $\lambda_1^\nu(u,v)=-\alpha/u$ and $\lambda_2^\nu(u,v)=\alpha/u$. Hence, 
$$
\lambda_2^\nu(u,v)\ge\frac{\alpha}{u_{sup}}=:\lambda^*>0,\qquad \qquad \forall\,\, y\in(0,\MM),\,\, t>0,
$$
for all $\nu$. Now set 
\begin{equation}\label{def:bar-lambda_bar-T}
\bar\lambda:=\frac{1}{2}\lambda^*>0\,,\qquad \bar T:=\MM/\bar\lambda
\end{equation}
and consider the triangular region in the Lagrangian coordinates $(y,\tau)$ that is described by 
\begin{equation}\label{def:Gamma}
    \Gamma_2:= \left\{(y,\tau)\in (0,\MM)\times (t,t+\bar T);\ y\ge\bar\lambda\cdot (\tau-t)=:\bar y(\tau)
    \right\}\,.
\end{equation}
Following this construction, we observe that by the choice of the slope $\bar\lambda$ all $2$-waves present at time $t$ will be trapped in $\Gamma_2$ and cannot escape from it.

For every $\tau\in (t,t+\bar T)$, define the subinterval $J(\tau):=(\bar y(\tau),\MM)$ of $(0,\MM)$ and consider the total size of $2$-rarefaction waves in the interval $J(\tau)$, i.e.
\begin{equation}\label{def.R2}
\widetilde R_2^\nu(\tau) = \sum_{ \eps_j\in\mathcal{R}_2,\ J(\tau-)} \eps_j\,.
\end{equation}
Observe that $\widetilde R_2^\nu(t)=R_2^\nu(t)$ and every front that is in this triangular region $\Gamma_2$ at time $\tau$ cannot escape, in the sense that it cannot cross the line $y= \bar \lambda\cdot (\tau-t)$ even after possible interactions. This means that after possibly interacting with other fronts, it would end up reaching the boundary $y=\MM$ before the time $t+\bar T$ and ending its lifespan. This allows us to write the following expansion for $\widetilde R_2^\nu(\tau)$:
\begin{align}\label{R2tildevar}
    0< \widetilde R_2^\nu(t) &= \widetilde R_2^\nu(t+\bar{T})- \sum_{
    \genfrac{}{}{0pt}{}{0<\tau-t<\bar T} {\tau\not= t^n, y= \MM}
    } \Delta \widetilde R_2^\nu(\tau) 
        -\sum_{
    \genfrac{}{}{0pt}{}{0<\tau-t<\bar T} {\tau\not= t^n, y=\bar y(\tau)}
    } \Delta \widetilde R_2^\nu(\tau)\nonumber\\
    & - \sum_{
    \genfrac{}{}{0pt}{} {t<t^n<t+\bar T}{}
    } \Delta \widetilde R_2^\nu(t^n)
    - \sum_{
    \genfrac{}{}{0pt}{}{0<\tau-t<\bar T} {\tau\not= t^n, y\ne \MM}
    } \Delta \widetilde R_2^\nu(\tau) \;.
\end{align}
We note that the third term accounts to new $2$-waves that may enter the region $\Gamma_2$ through the lateral side $y=\bar y(\tau)$. See Figure~\ref{Fig1} for an illustration of the several cases of $2$-waves as decomposed into~\eqref{R2tildevar}.
\begin{figure}\label{Fig1}
\floatbox[{\capbeside\thisfloatsetup{capbesideposition={right,center},capbesidewidth=8cm}}]{figure}[\FBwidth]
{
\caption{In this triangular region, in Lagrangian coordinates, for $\tau\in[t,t+\bar T]$ and $y\in J(
\tau)=(\bar y(\tau),\MM)$, we illustrate the waves of the second family that are trapped according to the following cases: \newline
Cases $(I)$ and $(II)$: $2$-waves meet the boundary $y=\MM$;\newline Case $(III)$: $2$-waves enter the region through the lateral side $y=\bar y(\tau)$;\newline Case $(IV)$: wave interactions and last,\newline Case $(V)$: at time steps.
\newline
Dashed lines: 1-waves, 
\newline
Solid lines: 2-waves}
}
{\scalebox{.6}{ \input{triangle.pspdftex}
}}
\end{figure}

Next, we introduce the piecewise constant weight
\begin{equation}\label{def:W}
W(\tau):= \exp\left\{\MM  \cdot n(\tau) \DT_\nu -\MM t\right\}  \,,  
\end{equation}
for $\tau\in(t,t+\bar T)$, where $n(\tau)$ is the integer part of $\tau/\DT_\nu$ satisfying
$$
n(\tau) \DT_\nu\le\tau<n(\tau) \DT_\nu+\DT_\nu
,\,\qquad
n(t^n+)-n(t^n-)=1\;.$$

The function $W(\tau)$  is increasing in $\tau$ and discontinuous at the time steps:
\begin{equation}\label{eq:WR2-at-timestep}
    W(t^n+) - W(t^n-) = W(t^n-) \left(e^{\MM\DT_\nu} -1 \right)\,,
\end{equation}
satisfying the bounds
\begin{equation}\label{eq:bound-on-WR2}
1   
\le W(\tau) \le e^{\MM \bar T}\,.
\end{equation}
We introduce now the weighted expression of $\widetilde R_2^\nu$ in the region $\Gamma_2$, i.e.
\begin{equation}\label{def:F2}
F_2^\nu(\tau):= \widetilde R_2^\nu(\tau) W(\tau),\qquad \tau\in(t,t+\bar T)    
\end{equation}
and in view of the above analysis for $\widetilde R_2(\tau)$, this satisfies
\begin{align}
    0\le F_2^\nu(t) &= F_2^\nu(t+\bar T) - \sum_{
    \genfrac{}{}{0pt}{}{0<\tau-t<\bar T} {\tau\not= t^n, y= \MM}
    }  \left(\Delta\widetilde R_2^\nu(\tau)\right) W(\tau) \nonumber\\
    &\qquad -\sum_{
    \genfrac{}{}{0pt}{}{0<\tau-t<\bar T} {\tau\not= t^n, y=\bar y(\tau)}
    }\left( \Delta \widetilde R_2^\nu(\tau) \right) W(\tau) \label{eq:variation-of-R2-W-0_III}
    \\
      &\qquad - \sum_{
    \genfrac{}{}{0pt}{}{0<\tau-t<\bar T} {\tau\not= t^n, \bar y<y< \MM}
    }  \left(\Delta\widetilde R_2^\nu(\tau)\right) W(\tau)\label{eq:variation-of-R2-W-0_IV} 
    \\
    &\qquad - \sum_{
    \genfrac{}{}{0pt}{}{n} {t<t^n<t+\bar T}
    } \left[ \widetilde R_2^\nu(t^n+)W(t^n+) -\widetilde R_2^\nu(t^n-)W(t^n-) \right] \nonumber
    \\
    &= (I) + (II) + (III)+(IV)+(V) \label{eq:variation-of-R2-W-0}
\end{align}
for $t>0$. Now, we investigate the terms $(I)$-$(V)$:

\begin{itemize}
    \item [$(I)$+$(II)$] We observe that $\widetilde R_2^\nu(t+\bar T)$ involves  waves squeezed into $J(t+\bar T)$ that is at the point $y=\MM$. Hence, $(I)$ can be treated together with $(II)$ that accounts to the waves reaching the boundary $y=\MM$ at previous times $\tau<t+\bar T$. In both cases, we have
    $$
    \widetilde R_2^\nu(t+\bar T)=-\Delta L^\nu(t+\bar T)>0,\qquad
    \Delta \widetilde R_2^\nu(\tau)=\Delta L^\nu(\tau)<0,\quad \tau<t+\bar T
    $$
   at the boundary $y=\MM$.
 By Case $1$ in \S~\ref{S2.1}, we get
\begin{equation*}
   (I)+(II)=\sum_{
    \genfrac{}{}{0pt}{}{0<\tau-t\le \bar T} {\tau\not= t^n, y= M}} \left|\Delta L^\nu(\tau)\right| W(\tau) \le e^{M  \bar T}
    \sum_{
    \genfrac{}{}{0pt}{}{0<\tau-t\le \bar T} {\tau\not= t^n, y= M}}\left|\Delta L^\nu(\tau)\right|  
\end{equation*}
for $t>0$.
\item [$(III)$] Regarding the next term \eqref{eq:variation-of-R2-W-0_III}, we observe that $\Delta \widetilde R_2^\nu(\tau)>0$ since new $2$-waves enter the region $\Gamma_2$ along $y=\bar y(\tau)$, hence the contribution of this term is negative, i.e. $(III)\le 0$. 
            \item [$(IV)$]
The next term \eqref{eq:variation-of-R2-W-0_IV} combines the variation of $\widetilde R_2^\nu(\tau)$ at interaction times $\tau\in(t,t+\bar T)$. The interactions that may produce a change in $\widetilde R_2^\nu(\tau)$ are: 

(i) Interactions of two shock-waves of the first family that result into a reflected wave $\eps_{refl}\in\mathcal{R}_2$. 
In this situation, $\Delta\widetilde R_2^\nu(\tau)>0$.

(ii) Interactions of waves of the second family that belong to Case $2$ in~\S~\ref{S: 4.1}. In this situation, denoting by $\beta^-$ the incoming rarefaction, $\beta^+$ the outgoing rarefaction, $\alpha$ the incoming shock, and $\eps_{refl}$ as usual the reflected $1$-wave, we get
$$
0<-\Delta\widetilde R_2^\nu(\tau)=-|\beta^+|+|\beta^-|=|\alpha|+|\eps_{refl}|\le 2|\alpha|=|\Delta L^\nu(\tau)|
\;.$$

(iii) Last, interactions of waves of the second family that belong to Case $3$ in~\S~\ref{S: 4.1}. Adapting in this situation again the previous notation, we have
$$
0<-\Delta\widetilde R_2^\nu(\tau)=|\beta^-|\le \frac{\cosh(q)+1}{4} \left|\Delta L^\nu(\tau)\right|\,. $$
We note that the shock-shock interaction of the second family is not relevant in the variation of $\widetilde R_2(\tau)$. Combining the above estimates, we get
\begin{equation*}
- \sum_{
    \genfrac{}{}{0pt}{}{0<\tau-t<\bar T} {\tau\not= t^n, y< M}
    }  \left(\Delta\widetilde R_2^\nu(\tau)\right) W(\tau)\le C e^{\MM  \bar T} \sum_{
    \genfrac{}{}{0pt}{}{0<\tau-t<\bar T} {\tau\not= t^n, y< \MM}
    }  \left|\Delta L^\nu(\tau)\right|\,,
\end{equation*}    
where $C$ is given at~\eqref{hyp:RK}.
\item [$(V)$] 
We claim that 
    $$
     \Delta F^\nu_2(t^n)=\widetilde R_2(t^n+)W(t^n+) - \widetilde R_2(t^n-)W(t^n-)  \ge 0\;,$$
    for all time steps $t^n\in(t,t+\bar T)$.
    Indeed, across a time step $t^n$, $ \widetilde R_2^\nu (t^n)$ changes if a rarefaction of the second family $\eps_2^-\in\mathcal{R}_2$ reaches the time step $t^n$ and it gets updated to 
    $\eps_2^+\in\mathcal{R}_2$ or a shock of the first family $\eps_1^-\in\mathcal{S}_1$ reaches the time step $t^n$ and after the update a new rarefaction $\eps_2^+\in\mathcal{R}_2$ is produced for the second family. Taking both situations into consideration, we write
    $$
    \Delta  \widetilde R_2^\nu (t^n) =\sum_{
     {\eps_2^-\in\mathcal{R}_2}
    }|\eps^+_2|-|\eps_2^-|+\sum_{
     {\eps_1^-\in\mathcal{S}_1}
    } |\eps_2^+|\ge \sum_{
     {\eps_2^-\in\mathcal{R}_2}
    }|\eps^+_2|-|\eps_2^-|\;.
    $$
    By  \cite[Proposition~ 3.2]{AC_2021} 
    if $\eps_2^-\in\mathcal{R}_2$ reaches the time step $t^n$, we have 
    $$ 0>|\eps^+_2|-|\eps_2^-|=\eps^+_2-\eps_2^-=-|\eps_1^+|\ge-\frac{1}{2} \MM\DT_\nu|\eps_2^-|\;.
    $$
    Hence, 
        $$
    \Delta  \widetilde R_2^\nu (t^n) \ge-\frac{1}{2}\MM\DT_\nu\,\widetilde R_2^\nu (t^n-)
    $$
    Now combining the above estimate with~\eqref{eq:WR2-at-timestep}, we arrive at
\begin{align*}
    \Delta F_2^\nu(t^n)=    \Delta (  \widetilde R_2^\nu W)(t^n) & = \widetilde R_2^\nu(t^n+) \Delta W(t^n) + W(t^n-)  \Delta\widetilde R_2^\nu(t^n)\\
    &= W(t^n-) \left[\widetilde R_2^\nu (t^n+) \left(e^{\MM \DT_\nu} -1 \right) +  \Delta\widetilde R_2^\nu(t^n) \right]\\
    &= W(t^n-) \left[\widetilde R_2^\nu(t^n+) e^{\MM \DT_\nu}   -\widetilde R_2^\nu(t^n-) \right]\\
 &\ge W(t^n-) \widetilde R_2^\nu(t^n-)  \left[ (1 - \frac \MM2 \DT_\nu) e^{\MM \DT_\nu} -1 \right]\\
 &\ge W^*(t^n-)\widetilde R_2^\nu(t^n-) 
   \cdot \frac{\MM\DT_\nu}{2}\cdot
(1-\MM\DT_\nu )\ge 0\;,
\end{align*}
since $\MM\DT_\nu<1$. The claim is proven.

\end{itemize}
In view of the above analysis, we arrive at
$$
0\le\widetilde R_2^\nu(t)\le \widetilde R_2^\nu(t)
+\sum_{
    \genfrac{}{}{0pt}{}{0<\tau-t<\bar T} {\tau\not= t^n, y=\bar y(\tau)}
    } \Delta \widetilde R_2^\nu(\tau) 
    \le F^\nu_2(t)+\sum_{
    \genfrac{}{}{0pt}{}{0<\tau-t<\bar T} {\tau\not= t^n, y=\bar y(\tau)}
    } \Delta \widetilde R_2^\nu(\tau) ,\qquad \forall t>0
   $$
and
$$
F^\nu_2(t)+\sum_{
    \genfrac{}{}{0pt}{}{0<\tau-t<\bar T} {\tau\not= t^n, y=\bar y(\tau)}
    } \Delta \widetilde R_2^\nu(\tau) \le C e^{\MM \bar T}  \sum_{
    \genfrac{}{}{0pt}{}{t< \tau\le t+\bar T} {}
    }  \left|\Delta L^\nu(\tau)\right| ,\qquad \forall \,t>0\,.
$$
Recalling that $R_2^\nu(t)=\widetilde{R}_2^\nu(t)$, we conclude
   \begin{align}\label{eq:variation-of-R2-W-0final}
0\le R_2^\nu(t) \le C e^{\MM \bar T} 
\sum_{
    \genfrac{}{}{0pt}{}{t< \tau\le t+\bar T } {}
    }  \left|\Delta L^\nu(\tau)\right| ,\qquad \forall t>0\;.
\end{align}
Here, we observe that the summation runs only over the cases (I), (II) and (IV) as discussed above. Hence this involves only $2$-waves reaching the boundary $y=\MM$ and/or interactions of $2$-waves.

Similarly, one can establish  estimate~\eqref{eq:variation-of-R2-W-0final} for $R_1^\nu(t)$ working in the region
\begin{equation}\label{def:Gamma1}
    \Gamma_1:= \left\{(y,\tau)\in (0,\MM)\times (t,t+\bar T);\ y\le \MM-\bar\lambda\cdot (\tau-t)=:\bar y_1(\tau)
    \right\}\,,
\end{equation}
since $\lambda_1^\nu(u,v)\le\lambda^*$, for all $t>0$ and $\nu$. In $\Gamma_1$, estimate~\eqref{eq:variation-of-R2-W-0final} for $R_1^\nu(t)$ holds
having the summation running over the times $\tau$ that a $1$-wave meets the boundary $y=0$ or for interactions of waves of the first family. Thus $ R^\nu(t)= R_1^\nu(t)+ R_2^\nu(t)$ satisfies~\eqref{hyp:Rtto0L}. 
We note here that $C_R \, \dot =\, C e^{\MM \bar T}$ because the summation in~\eqref{eq:variation-of-R2-W-0final} runs over the cases of the second family only but now in~\eqref{hyp:Rtto0L} it combines the fronts for both families.
The proof of Lemma~\ref{lem:rar-vanishes-for-large-t} is complete.
\end{proof}

\subsection{Uniform long-time behavior}
In this subsection, we address the proof of Lemma~\ref{lem:2.1}, i.e. we prove that $ L^\nu(t) \to L^\nu_\infty=0$ as $t\to\infty$ uniformly in $\nu$.

Since the sequence $\{L^\nu(t)\}_{\nu\in\N}$ is uniformly bounded and non-increasing, there exists a function $L^\infty(t)$ that is, up to a subsequence, the pointwise limit of $L^\nu(t)$,
\begin{equation}\label{def:L-in_infty}
    L^\nu(t)\xrightarrow{\nu\to\infty} L^\infty(t) \qquad \forall\ t\ge 0\,.
\end{equation}
As it is shown in the next proposition, it suffices to establish that $L^\infty(t)$ decays to zero as $t\to\infty$ since this immediately implies $L^\nu_\infty=0$.

\begin{proposition}\label{prop:unif-conv-of-L}
If $L^\infty(t)\to0$ as $t\to\infty$, then $L^\nu(t)\to0$ as $t\to\infty$ uniformly in $\nu$.
\end{proposition}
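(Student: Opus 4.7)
The plan is a Dini-type argument that combines the pointwise convergence $L^\nu(t)\to L^\infty(t)$ with the monotonicity of each $L^\nu(\cdot)$ in $t$. First I would note that since each $L^\nu$ is non-increasing and the pointwise limit of non-increasing functions inherits the same monotonicity, the function $L^\infty$ is itself non-increasing in $t$; by the hypothesis of the proposition it therefore admits the limit $0$ as $t\to+\infty$.

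Given $\eps>0$, I would choose $T_0=T_0(\eps)>0$ so that $L^\infty(T_0)<\eps/2$. By the pointwise convergence at the single point $T_0$, there exists $\nu_0=\nu_0(\eps)\in\N$ such that $|L^\nu(T_0)-L^\infty(T_0)|<\eps/2$, and hence $L^\nu(T_0)<\eps$, for every $\nu\geq\nu_0$. Applying now the monotonicity of $L^\nu(\cdot)$ in $t$ yields
\[
L^\nu(t)\ \leq\ L^\nu(T_0)\ <\ \eps \qquad \forall\,t\geq T_0,\ \forall\,\nu\geq\nu_0,
\]
which is the sought uniform decay for all sufficiently large $\nu$. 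For each of the finitely many residual indices $\nu<\nu_0$, the function $L^\nu(\cdot)$ is itself a bounded non-increasing step function; its individual decay $L^\nu(t)\to 0$ as $t\to+\infty$ can be obtained by applying the same wave-decay toolkit (the rarefaction vanishing of Lemma~\ref{lem:rar-vanishes-for-large-t} combined with the shock cancellation estimates of Case~3 of Proposition~\ref{prop:3.1}, together with the shock/rarefaction trade-off of Lemma~\ref{L4.2}) at the fixed approximation level $\nu$. Picking the corresponding thresholds $T_\nu\geq T_0$ and setting $T:=\max\{T_0,T_\nu:\nu<\nu_0\}$ produces a single threshold valid for every $\nu\in\N$.

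The final step is to let $t\to\infty$ in the uniform bound $L^\nu(t)<\eps$, which gives $L^\nu_\infty\leq\eps$ for every $\nu$; since $\eps>0$ is arbitrary, this yields $L^\nu_\infty=0$ for every $\nu$, as announced in the paragraph preceding the proposition.

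The principal obstacle will be the treatment of those finitely many small-$\nu$ indices, for which the passage to the limit $\nu\to\infty$ is not available and one must instead invoke the full wave-decay machinery a second time, at a fixed approximation level. Once that point is granted, the remainder of the argument is a standard Dini-type passage exploiting pointwise convergence and monotonicity in $t$, with no further use of the front-tracking structure.
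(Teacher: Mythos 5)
Your proposal is correct and is essentially the paper's own proof: both arguments fix a time $T_0$ at which $L^\infty(T_0)<\eps/2$, use the pointwise convergence $L^\nu(T_0)\to L^\infty(T_0)$ at that single time to get $L^\nu(T_0)<\eps$ for all $\nu\ge\nu_0$, and then propagate the bound to all $t\ge T_0$ via the monotonicity of $t\mapsto L^\nu(t)$. The only difference is your extra treatment of the finitely many indices $\nu<\nu_0$ by rerunning the wave-decay machinery at fixed $\nu$ (which is indeed feasible since Lemma~\ref{lem:rar-vanishes-for-large-t} and the contradiction argument of Lemma~\ref{L4.2} work for a single $\nu$ with $\MM\DT_\nu<1$); the paper simply stops at the bound for $\nu\ge\nu_0$, which is all that is used afterwards (e.g.\ Lemma~\ref{lem:3.2} is stated for $\nu>\nu_0$ and the final argument passes to the limit $\nu\to\infty$), so your additional step is harmless but not required.
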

\begin{proof}
Let $\eps>0$. Then there exists $T^\infty_\eps>0$ such that 
$$
0\le L^\infty(t)<\eps,\qquad\forall t>T^\infty_\eps
$$
By the pointwise convergence of $L^\nu(t)$, there exists $\bar\nu=\bar\nu(T^\infty_\eps)$ so that
$$
|L^\nu(T^\infty_\eps)-L^\infty(T^\infty_\eps)|<\eps
$$
for all $\nu\ge\bar \nu$. However, since  $L^\nu(t)$ is non-increasing in time, we have
$$
L^\nu(t)\le L^\nu(T^\infty_\eps)\le L^\infty(T^\infty_\eps)+\eps<2\eps
$$
for all $t\ge T^\infty_\eps$ and $\nu\ge\bar\nu$. This completes the proof.
\end{proof}

In the following lemma, we show that for the weak solution $(\rho,\mm)$ already constructed via the front tracking algorithm in the sense of Definition~\ref{entropy-sol}, the functional $L^\infty(t)$ decays to zero as $t\to\infty$ and this is accomplished by studying the wave decay after a time threshold related to $\bar T$ and within a time length that is large enough. In this setting, we manage to show that the rarefaction part, that is vanishing, will force the total variation to vanish as well.

\begin{lemma}\label{L4.2}
Let $(\rho,\mm)$ be an entropy weak solution with concentration along $a(t)$ and $b(t)$ to~\eqref{eq:system_Eulerian_M-M1-K=1} 
obtained as a limit of wave-front tracking approximation
with the associated functional 
 $  L^\infty(t)$ as given by~\eqref{def:L-in_infty}. Then the following limit holds true
\begin{equation}\label{eq:L-infty-to0}
    L^\infty(t)\to 0 \qquad t\to+\infty\,.
\end{equation}
\end{lemma}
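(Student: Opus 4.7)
The plan is a proof by contradiction, built on the observation that the fractional damping step \eqref{eq:u-v_fractional-step} converts shock content into rarefaction content of the opposite family. Each $L^\nu$ is non-increasing, so the pointwise limit $L^\infty$ is non-increasing and has a limit $L_\infty\ge 0$ as $t\to+\infty$; I assume $L_\infty>0$. Passing the bound \eqref{hyp:Rtto0L} of Lemma~\ref{lem:rar-vanishes-for-large-t} to the limit yields
\[
\limsup_{\nu\to\infty} R^\nu(t)\ \le\ C_R\bigl(L^\infty(t)-L^\infty(t+\bar T)\bigr)\,,
\]
whose right-hand side tends to zero as $t\to+\infty$. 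Since $R^\nu+S^\nu=L^\nu\to L^\infty$, this forces $\liminf_{\nu\to\infty}S^\nu(t)\to L_\infty>0$, i.e. the large-time total variation is asymptotically concentrated in shocks.

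The central new ingredient is a quantitative analysis of \eqref{eq:u-v_fractional-step} applied to shocks, analogous to Proposition~3.2 of \cite{AC_2021} which was used for rarefactions in the proof of Lemma~\ref{lem:rar-vanishes-for-large-t}. Because the step multiplies $v^\nu$ by $1-\MM\DT_\nu$ while leaving $u^\nu$ untouched, a pure $2$-shock $\eps_2^-<0$ present at $t^n-$ becomes a Riemann datum whose pressure jump is preserved but whose velocity jump is contracted; solving this Riemann problem for small $\DT_\nu$ yields a weaker $2$-shock $\eps_2^+$ together with a new $1$-\emph{rarefaction} $\eps_1^+>0$ of size $(\MM\DT_\nu/2)|\eps_2^-|+O(\DT_\nu^2)$, and symmetrically for $1$-shocks. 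Combined with the rarefaction-side Proposition~3.2 of \cite{AC_2021}, this gives, for some $c>0$,
\[
\Delta R^\nu(t^n)\ \ge\ c\,\MM\DT_\nu\bigl(S^\nu(t^n-)-R^\nu(t^n-)\bigr)\,.
\]

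The concluding step is a bookkeeping argument on an interval $[t_1,t_2]$. Fix $\eta>0$; choose $t_1$ large so that $L^\infty(t_1)-L_\infty<\eta$, and then $\nu$ large so that $L^\nu(t_1)-L^\nu(t_2+\bar T)<2\eta$ and $L^\nu(t_2)\ge L_\infty-\eta$. By monotonicity of $L^\nu$ and Lemma~\ref{lem:rar-vanishes-for-large-t}, $R^\nu(t)\le 2C_R\eta$ and $S^\nu(t)\ge L_\infty-(1+2C_R)\eta$ for every $t\in[t_1,t_2]$. For $\eta$ small enough this gives $S^\nu-R^\nu\ge L_\infty/2$ throughout, and summing the time-step estimate yields a generation lower bound of at least $c\MM L_\infty(t_2-t_1)/2$. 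On the other hand, the interactions of Cases~1--3 of Proposition~\ref{prop:3.1} satisfy $|\Delta R^\nu|\le C_*|\Delta L^\nu|$ with $C_*=\max\{1,(\cosh q+1)/4\}$, so the total rarefaction absorbed is $\le C_*(L^\nu(t_1)-L^\nu(t_2))\le 2C_*\eta$, while $SS\to RS$ interactions can only add to $R^\nu$. Since $|R^\nu(t_2)-R^\nu(t_1)|\le 4C_R\eta$, combining these estimates gives
\[
(t_2-t_1)\ \le\ \frac{4(2C_R+C_*)\eta}{c\,\MM L_\infty}\,,
\]
a bound independent of $t_1,t_2,\nu$; letting $t_2-t_1\to+\infty$ produces the contradiction.

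The main obstacle will be the rigorous shock-side counterpart to Proposition~3.2 of \cite{AC_2021}, in particular a \emph{lower} bound on the opposite-family rarefaction produced by each shock at a time step that is uniform in the shock and in $\nu$, as this is the sole source of the positive generation rate driving the argument. The remaining ingredients---uniform smallness of $R^\nu$ on $[t_1,t_2]$, the absorption inequality $|\Delta R^\nu|\le C_*|\Delta L^\nu|$, and the nonnegativity of the $SS\to RS$ contribution---follow directly from Lemma~\ref{lem:rar-vanishes-for-large-t}, monotonicity of $L^\nu$, and the case analysis of Proposition~\ref{prop:3.1}.
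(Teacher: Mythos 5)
Your proposal is correct and follows essentially the same route as the paper's proof: argue by contradiction, use Lemma~\ref{lem:rar-vanishes-for-large-t} to make $R^\nu$ small while $S^\nu$ stays uniformly positive on a suitable time window, and then contradict the smallness of the positive variation of $R^\nu$ by the fact that the persistent shocks generate opposite-family rarefactions of size at least $c_1(q)\,\MM\DT\,|\eps^-|$ at every time step. The shock-side time-step estimate you single out as the main obstacle is precisely \cite[Proposition~3.2]{AC_2021}, which the paper simply cites (the remaining differences---your window of arbitrary length $t_2-t_1\to\infty$ versus the paper's fixed window of length $\bar T$ with $\eta\to 0$, and your merged inequality $\Delta R^\nu(t^n)\ge c\,\MM\DT_\nu(S^\nu-R^\nu)$, which is only valid in the regime where $R^\nu$ is already small---are cosmetic).
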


\begin{proof} To prove \eqref{eq:L-infty-to0} we proceed by contradiction and assume that 
\begin{equation}\label{eq:L-infty-tonot0}
\lim_{t\to\infty}  L^\infty(t) =: \bar L >0 \,.
\end{equation}
Let $\eta>0$. By the existence of the limit~\eqref{eq:L-infty-tonot0}, there exists $T_{\eta}^\infty>0$ such that 
\begin{equation}\label{eq:limit-of-Lin-infty}
0\le\, L^\infty(t) - \bar L = \sum_{\tau> t} \left|\Delta L^\infty(\tau)\right| \le \frac\eta 2\qquad \forall\, t> T_\eta^\infty\,.
\end{equation}
Now, fix a time $t_1$  with $t_1\ge T_\eta^\infty$ and recall the quantity $\bar T$, independent of $\nu$, that has been introduced in \eqref{def:bar-lambda_bar-T} during the proof of Lemma~\ref{lem:rar-vanishes-for-large-t} and represents the time length for which \eqref{hyp:Rtto0L} holds. 
From the monotonicity of $L^\infty(t)$ and~\eqref{eq:limit-of-Lin-infty}, it follows
\begin{equation*}
    L^\infty(t_1) - L^\infty(t_1+ 2 \bar T) 
    \le  L^\infty(T_{\eta}^\infty) -\bar L\le \frac\eta 2\,.
\end{equation*}
By the pointwise convergence of the sequence $\{L^\nu\}_{\nu\in\N}$, there exists $\bar\nu=\bar\nu(t_1,\bar T,\eta)$ large enough such that
\begin{equation}\label{eq:approx_t_t+2T}
|L^\nu(t_1)-L^\infty(t_1)|<\frac \eta 4,\qquad |L^\nu(t_1+ 2 \bar T)-L^\infty(t_1+ 2 \bar T)|<\frac \eta 4
\end{equation}
and hence
\begin{equation}\label{eq:L-nu-t1barT}
    0\le L^\nu(t_1) - L^\nu(t_1+ 2 \bar T) 
    =\sum_{t_1 < 
    \tau \le 
    t_1+2\bar T}|\Delta L^\nu(\tau)|
    \le \eta \,,
\end{equation}
for all $\nu\ge\bar\nu$\,, where we used the right continuity of $L^\nu(\cdot)$.    
From \eqref{eq:L-nu-t1barT} we get
\begin{equation}\label{eq:L-nu-t1barT-bis}
 0\le L^\nu(t) - L^\nu(t+  \bar T) \le L^\nu(t_1) - L^\nu(t_1+ 2 \bar T) \le \eta
\end{equation}
for all $t\in [t_1,t_1+\bar T]$, since $[t,t+\bar T]\subset [t_1,t_1+2\bar T] $.

Let now $\bar \nu_1\ge\bar\nu$ such that $M\DT_\nu<1$ for all $\nu\ge\bar\nu_1$. From estimate~\eqref{hyp:Rtto0L} in Lemma~\ref{lem:rar-vanishes-for-large-t} and~\eqref{eq:L-nu-t1barT-bis}, 
we immediately deduce
\begin{equation}\label{eq:R-nu-t1barT}
R^\nu(t)\le C_R 
\sum_{t< \tau\le t+\bar T}|\Delta L^\nu(\tau)| \le C_R  \eta
\end{equation}
for all $t\in [t_1,t_1+\bar T]$ and $\nu\ge\bar\nu_1$, with $C_R$ independent of $\nu$ given at~\eqref{hyp:RK}. On the other hand, using the second inequality in \eqref{eq:approx_t_t+2T} for all $\nu\ge\bar\nu_1$ and the monotonicity of $L^\nu$ in time and~\eqref{eq:limit-of-Lin-infty}, we get
\begin{align*}
    L^\nu(t)&\ge L^\nu(t_1+2\bar T)\\
    &= \left(  L^\nu(t_1+2\bar T) - L^\infty(t_1+2\bar T)\right) +\left( L^\infty(t_1+2\bar T)- \bar L\right)+ \bar L
    \\
    &\ge 
    -\frac \eta 4 + 0+\bar L
\end{align*}
for all $ t\in [t_1, t_1+\bar T]$ and $\nu\ge\bar\nu_1$.

Next, we recall the shock part $S^\nu(t)$ defined in~\eqref{def.SR}. Having the above estimates, we can deduce that for $\nu$ large enough, $S^\nu(t)$ is uniformly positive on the time interval $(t_1,t_1+\bar T)$ while the rarefaction part $R^\nu(t)$ is small. More precisely, for $\eta>0$ small enough such that
$$
 \eta < \frac{\bar L}{ 2 \bar C}\qquad \text{with }\bar C \,\dot =\, C_R + \frac 14
$$
one has
$$
S^\nu(t)=L^\nu(t)-R^\nu(t)\ge \bar L-\bar C \eta  >\frac{\bar L}{2}>0
$$
for $t\in[t_1,t_1+\bar T]$ and $\nu\ge\bar\nu_1$. This immediately implies that we have the following lower bound 
\begin{equation}\label{eq:Snutimestepsadd}
    \sum_{t_1< t^n\le t_1+\bar T} \MM\DT S^\nu(t^n-)\,>\,\frac{\bar L}{2} \MM \bar T
\end{equation}
true for all $\nu\ge\bar\nu_1$, noting that the lower bound is independent of $\nu$.

 In what follows, we show that \eqref{eq:Snutimestepsadd} leads to a contradiction. Roughly speaking, a uniformly positive variation of shocks $S^\nu(t)$ in the interval $(t_1,t_1+\bar T)$ generates a uniformly positive amount of rarefactions at the time steps, due to the source term. However, such a uniform amount of $R^\nu(t)$ is in contradiction with \eqref{eq:R-nu-t1barT}.

Indeed, lets investigate the variation of $R^\nu(t)$ in the interval $t\in(t_1,t_1+\bar T)$.
\begin{equation}\label{eq:R-nu-t1barT2}
    R^\nu(t)=R^\nu(t_1)+\sum_{t_1<\tau<t} [\Delta R^\nu(\tau)]_+-[\Delta R^\nu(\tau)]_-
\end{equation}
where $[f]_+$ and $[f]_-$ denote the positive and negative part of $f$, respectively. Before we proceed, let us list the cases involved in the negative variation of $R^\nu$  as analysed in the proof of Lemma~\ref{lem:rar-vanishes-for-large-t}:\\
There are three cases for which $[\Delta R^\nu(\tau)]_-$ is nonzero and these are:

$(i)$ when there is a shock-rarefaction interaction of the same family that results to a shock or a rarefaction of the same family. In view of the analysis of the term (IV) in Lemma~\ref{lem:rar-vanishes-for-large-t}, it holds $[\Delta R^\nu(\tau)]_-\le C\,|\Delta L^\nu(\tau)|$ in this case, where $C\ge 1$ is given at~\eqref{hyp:RK}; 

$(ii)$ when a rarefaction front reaches the boundary $y=0$ or $y=\MM$. In this case, we have again $[\Delta R^\nu(\tau)]_-\le \,|\Delta L^\nu(\tau)|$ as mentioned in term $(II)$;

$(iii)$ when a rarefaction front $\eps^-$ meets a time step at $\tau=t^n$. In this case, we have $[\Delta R^\nu(\tau)]_-\le \frac 1 2 \MM \DT |\eps^-|$. One can verify this from the analysis of the term (V) in Lemma~\ref{lem:rar-vanishes-for-large-t}. Summing over all time steps and using estimate~\eqref{eq:R-nu-t1barT}, we reach
$$
\sum_{
\genfrac{}{}{0pt}{}{t_1<t^n<t} {\eps^-\in\mathcal{R}}
} [\Delta R^\nu(t^n)]_-\le
\frac \MM 2  \DT \sum_{
\genfrac{}{}{0pt}{}{t_1<t^n<t} {\eps^-\in\mathcal{R}}
}
|R^\nu(t^n-)|= \frac \MM 2  (t-t_1) C_R
\eta<\frac \MM 2  \bar T C_R 
\, \eta
$$
for all $t\in(t_1,t_1+\bar T)$ and $\nu\ge\bar\nu_1$. Combining now these three cases with~\eqref{eq:L-nu-t1barT}, we obtain 
\begin{align*}
    \sum_{t_1<\tau<t} [\Delta R^\nu(\tau)]_-  & < \frac \MM 2  \bar T C_R \, \eta + C \sum_{t_1<\tau<t} |\Delta L^\nu(\tau)|\\
& <  
\left(\frac \MM 2  \bar T C_R +C\right)\eta\;,
\end{align*}
for all $t\in(t_1,t_1+\bar T)$ and $\nu\ge\bar\nu_1$. Substituting this and estimate~\eqref{eq:R-nu-t1barT} into~\eqref{eq:R-nu-t1barT2}, we infer that the sum of the positive part is also small, i.e.
\begin{align}\nonumber
\sum_{t_1<\tau<t} [\Delta R^\nu(\tau)]_+ 
&=  R^\nu(t)- R^\nu(t_1)+\sum_{t_1<\tau<t}[\Delta R^\nu(\tau)]_-\\
&\le  R^\nu(t)+\sum_{t_1<\tau<t}[\Delta R^\nu(\tau)]_- \nonumber\\
&\le \left(C_R+\frac \MM 2  \bar T C_R +C\right)\eta\;, \label{eq:DRpos}   
\end{align}
for all $t\in(t_1,t_1+\bar T)$ and $\nu\ge\bar\nu_1$.
Let us examine now the cases involved in the positive variation of $R^\nu$, which are two:

$(i)$ when there is a shock-shock interaction of the same family and hence, a reflected rarefaction arises after the interaction. In this case $[\Delta R^\nu(\tau)]_+\ge 0$;

$(ii)$ when a shock front $\eps^-$ meets a time step at time $\tau=t^n$  and a rarefaction front $\eps_{refl}$ of the other family arises after the update. From \cite[Proposition 3.2]{AC_2021}, it holds $[\Delta R^\nu(t^n)]_+=|\eps_{refl}|\ge \MM\DT c_1(q) |\eps^-|$,
where $c_1(q)= (1+\cosh(q))^{-1}$.
\begin{figure}\label{Fig2}
\floatbox[{\capbeside\thisfloatsetup{capbesideposition={right,center},capbesidewidth=8.2cm}}]{figure}[\FBwidth]
{
\caption{This figure illustrates that the presence of shocks as time evolves gives rise to a rarefaction part that is comparable in size and not vanishing. The two cases for which the positive variation of $R^\nu(\tau)$ is estimated from below in~\eqref{eq: for fig2} are shown in this figure for $\tau\in(t,t+\bar T)$. More precisely,
\newline 
(i) after a shock-shock interaction of the same family, $[\Delta R^\nu(\tau)]_+=\eps_{refl}\ge 0$ and\newline (ii) after a shock of size $\eps^-$ gets updated at a time step, $[\Delta R^\nu(\tau)]_+\ge \MM \Delta t c_1(q) |\eps^-|$\;.
\newline
Solid lines: Shock fronts (S)
\newline
Dashed lines: Rarefaction fronts (R)
}}
{\scalebox{.6}{ \input{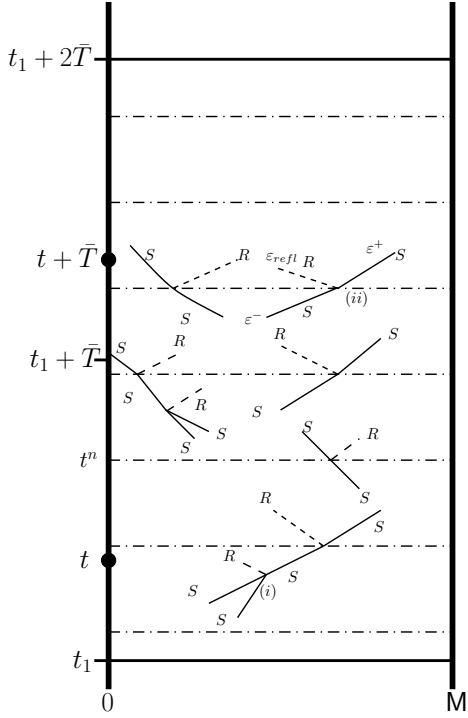}
}}
\end{figure}
Taking into account only case (ii) that involves the time steps, we get
\begin{equation}\label{eq: for fig2}
\sum_{t_1<\tau<t} [\Delta R^\nu(\tau)]_+\ge \sum_{
\genfrac{}{}{0pt}{}{t_1<t^n<t} {\eps^-\in\mathcal{S}}
} [\Delta R^\nu(t^n)]_+\ge  c_1(q) \sum_{t_1<t^n<t}  \MM\DT S^\nu(t^n-)\;,
\end{equation}
where the sum in the middle term above contains the newly produced rarefactions that arise as reflected waves of previously existing shocks.
However, using the lower bound~\eqref{eq:Snutimestepsadd}, we arrive at
$$
\sum_{t_1<\tau<t} [\Delta R^\nu(\tau)]_+\ge c_1(q)\frac{\bar L}{2} \MM \bar T>0\;,
$$
for all $t\in(t_1,t_1+\bar T)$, which contradicts~\eqref{eq:DRpos} since $\eta$ can be arbitrarily small. Thus, we deduce that $\bar L=0$ and the proof is complete.
\end{proof}

Now, Lemma~\ref{lem:2.1} follows immediately by combining Lemma~\ref{L4.2} and Proposition~\ref{prop:unif-conv-of-L}.

\Section{Exponential convergence}\label{S5}
It remains to show that the time asymptotic flocking is exponentially fast. The strategy of the proof of this decay is similar in spirit to the analysis of the previous section and relies again on the wave decay.

\begin{lemma}\label{S5Prop 5.1}
There exists $T^*>0$, such that 
\begin{equation}\label{T*star}
    L^\infty(\tau+T^*)\le \frac 12 L^\infty(\tau)\,,\qquad \forall\, \tau \ge 0 \,.
\end{equation} 
\end{lemma}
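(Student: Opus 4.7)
The plan is to turn the qualitative argument of Lemma~\ref{L4.2} into a quantitative one. Fix $\tau\ge 0$, set $\ell := L^\infty(\tau)$, and assume $\ell>0$ (otherwise the conclusion is trivial). The plan is to argue by contradiction: assume $L^\infty(\tau+T^*)>\ell/2$ for the $T^*$ chosen below, and derive a contradiction. Using the pointwise convergence $L^\nu\to L^\infty$ together with the monotonicity of $L^\nu$, I would fix $\nu$ large enough that $L^\nu(t)\in[\ell/2-\varepsilon,\ell+\varepsilon]$ throughout $[\tau,\tau+T^*]$ and the total drop $L^\nu(\tau)-L^\nu(\tau+T^*)\le \ell/2+O(\varepsilon)$, with $\varepsilon\to 0$ as $\nu\to\infty$.

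The decisive input is a $T^*$-uniform bound on $\int R^\nu(t)\,dt$. Lemma~\ref{lem:rar-vanishes-for-large-t} provides $R^\nu(t)\le C_R(L^\nu(t)-L^\nu(t+\bar T))$ pointwise; integrating in $t$ and using the telescoping identity $\int_{\tau}^{\tau+T^*-\bar T}(L^\nu(t)-L^\nu(t+\bar T))\,dt\le \bar T(L^\nu(\tau)-L^\nu(\tau+T^*))$, one obtains $\int_\tau^{\tau+T^*}R^\nu(t)\,dt\le C_R\bar T(\ell/2+O(\varepsilon))$, \emph{independently of} $T^*$. Combining with $L^\nu\ge \ell/2 - \varepsilon$, the shock part satisfies $\int_\tau^{\tau+T^*}S^\nu(t)\,dt \ge T^*\ell/2 - C_R\bar T\ell/2 - O(\varepsilon)$, which grows linearly in $T^*$.

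Next I would estimate the positive variation $\sum_{\tau<t\le \tau+T^*}[\Delta R^\nu(t)]_+$ from both sides, as in the proof of Lemma~\ref{L4.2}. The negative variation comes from three sources (interactions bounded by $|\Delta L^\nu|$ through Cases~2--3 of Proposition~\ref{prop:3.1}, boundary losses bounded by $|\Delta L^\nu|$ through Case~1, and rarefactions crossing a time step bounded by $\tfrac{1}{2}\MM\Delta t\,R^\nu(t^n-)$); combined with the integrated rarefaction bound, this gives
\[
\sum[\Delta R^\nu]_+ \le R^\nu(\tau+T^*)+\sum[\Delta R^\nu]_- \le C_1\,\ell+O(\varepsilon),
\]
for a constant $C_1$ depending only on $C$, $C_R$, $\MM$, $\bar T$. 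From below, at every time step $t^n\in(\tau,\tau+T^*)$ the source creates reflected rarefactions of total size at least $c_1(q)\MM\Delta t\, S^\nu(t^n-)$ out of the existing shocks (case~(ii) of the proof of Lemma~\ref{L4.2}); summing these contributions and comparing the Riemann sum to the integral with the lower bound on $\int S^\nu$ yields
\[
\sum[\Delta R^\nu]_+ \ge c_1(q)\MM\Bigl(\tfrac{T^*}{2}-\tfrac{C_R\bar T}{2}\Bigr)\ell \;-\; O(\varepsilon,\Delta t).
\]

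Comparing these, the lower bound is linear in $T^*$ while the upper bound is $O(\ell)$ uniformly in $T^*$. Therefore I would choose $T^*$ large enough in terms only of $c_1(q)$, $C_1$, $\MM$, $\bar T$, $C_R$ (and not of $\tau$ or $\ell$), and then $\nu$ large enough for $\varepsilon$ and $\Delta t$ to be negligible, to make the two estimates incompatible. This rules out $L^\infty(\tau+T^*)>\ell/2$ and yields \eqref{T*star} with a $T^*$ that is independent of $\tau$. The main obstacle in executing the plan is securing the $T^*$-uniform upper bound $\sum[\Delta R^\nu]_+ \le C_1\ell$: without the telescoping integrated bound on $R^\nu$, the losses of rarefactions at time steps would accumulate with $T^*$ and prevent the final contradiction.
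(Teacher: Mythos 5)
Your argument is correct in substance, but it follows a genuinely different route from the paper. The paper's proof is non-constructive: it defines $T^{**}(\tau)$ as the first time $L^\infty$ halves, sets $T^*=\sup_\tau\,(T^{**}(\tau)-\tau)+1$, and assumes $T^*=+\infty$; along a sequence $\tau_m$ with $T^{**}(\tau_m)-\tau_m\to\infty$ it uses a pigeonhole argument to locate a window $[t_0,t_0+\bar T]$ of \emph{fixed} length $\bar T$ inside $(\tau_{m_0},T^{**}(\tau_{m_0}))$ on which $L^\infty$ drops by at most $\tfrac\delta2 L^\infty(\tau_{m_0})$, and then reruns on that short window exactly the wave-balance computation of Lemma~\ref{L4.2}: small rarefaction part by \eqref{hyp:Rtto0L}, shock part $\ge \tfrac15 L^\infty(\tau_{m_0})$, hence a production of rarefactions $\ge c_1(q)\tfrac15 L^\infty(\tau_{m_0})\MM\bar T$ at the time steps, contradicting the $O(\delta)$ bound on the positive variation of $R^\nu$. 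You instead fix $\tau$, work on the whole interval $[\tau,\tau+T^*]$ under the contradiction hypothesis $L^\infty(\tau+T^*)>\tfrac12 L^\infty(\tau)$, and upgrade \eqref{hyp:Rtto0L} to a $T^*$-uniform \emph{integrated} bound on $R^\nu$ by telescoping $L^\nu(t)-L^\nu(t+\bar T)$; this makes the time-step losses of rarefactions $O(\ell)$ uniformly in $T^*$, while the time-step production out of shocks grows linearly in $T^*$, and since both sides scale linearly in $\ell=L^\infty(\tau)$ the resulting threshold for $T^*$ depends only on $c_1(q),\MM,\bar T,C_R$, not on $\tau$ or $\ell$. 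What your approach buys is an explicit, constructive half-life $T^*$ and the elimination of both the sup/contradiction step and the pigeonhole; what the paper's approach buys is that all quantitative estimates are confined to a window of fixed length $\bar T$, so the computations of Lemma~\ref{L4.2} are reused verbatim. One execution detail to fix: the comparison of $\sum_{t^n}\DT\,R^\nu(t^n-)$ (and likewise $\sum_{t^n}\DT\,S^\nu(t^n-)$) with $\int R^\nu\,dt$ is not immediate because $R^\nu$ is piecewise constant but not monotone between time steps; rather than passing through the Lebesgue integral, apply the pointwise bound $R^\nu(t^n-)\le C_R\bigl(L^\nu(t^{n-1})-L^\nu(t^n+\bar T)\bigr)$ at each time step and count how many of these overlapping windows contain a given jump of $L^\nu$ (about $\bar T/\DT$), which gives the same uniform-in-$T^*$ bound $\sum_{t^n}\DT\,R^\nu(t^n-)\le C_R(\bar T+\DT)\bigl(L^\nu(\tau)+O(\eps)\bigr)$ by a discrete telescoping; with this adjustment, and bounding the leftover strip of length $\bar T$ near $\tau+T^*$ crudely by $R^\nu\le L^\nu\le \ell+O(\eps)$, your contradiction goes through.
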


\begin{proof}
We recall the functional $L^\infty(\cdot)$ from definition \eqref{def:L-in_infty} that is a non-increasing function of time and it decays to $\bar L=0$ from Lemma~\ref{L4.2}.

For every $\tau>0$, now, we first define the time $T^{**}$ to be
\begin{equation}\label{T**star}
  T^{**}(\tau):=\inf\left\{
  t:\, t\ge \tau,\,\, L^\infty(t)\le \frac12 L^\infty(\tau)
  \right\}\;.
\end{equation} 
Here, we note that the infimum exists since $L^\infty(\tau)\ge 0$ and $L^\infty(\tau)\to0$ as $\tau\to+\infty$. Hence the set is non empty and therefore $T^{**}$ is well defined. Moreover, we state some properties of $T^{**}$: for every $\tau> 0$ with $L^\infty(\tau)>0$,
\begin{equation}\label{eq 5 T** t ineq}
    L^\infty(T^{**}(\tau)-)>\frac 12 L^\infty(\tau),\qquad L^\infty(T^{**}(\tau)+)\le\frac 12 L^\infty(\tau)\,.
\end{equation}
On the other hand, we observe that if $L^\infty(\tau)=0$ for some $\tau$, then $T^{**}(\tau)=\tau$.

Next we set
\begin{equation}\label{T*stardef}
  T^{*}:=\sup\left\{
  T^{**}(\tau)-\tau \,:\, \tau\ge 0
  \right\} \,+\, 1\;,
\end{equation} 
and we aim to prove that $T^{*}$ is finite.
\footnote{We may interpret $T^*$ in \eqref{T*stardef} as an estimate of the {\it half-life} time for $L^\infty$, in view of \eqref{T*star}. 
The $+1$ in \eqref{T*stardef} could be replaced by any constant $\sigma>0$. The limit as $\sigma\to 0+$ leads to a better estimate of such half-life time.}
Indeed, having proved that $T^{*}<+\infty$, by the definition~\eqref{T*stardef}, we find that 
$$
T^{*} + \tau \ge  T^{**}(\tau)+ 1 \qquad \forall \, \tau\,,
$$
that yields, by the monotonicity of $L^\infty$:
$$
L^\infty(T^*+\tau)\le L^\infty(T^{**}(\tau)+ 1)\le L^\infty(T^{**}(\tau)+)\le \frac12 L^\infty(\tau)
$$
for every $\tau\ge 0$. Thus, estimate~\eqref{T*star} holds true for $T^*$ given at~\eqref{T*stardef} and the proof is complete.

In view of the above, it suffices to show that $T^{*}<+\infty$. We proceed by contradiction and we assume that 
$T^{*}=+\infty$, that is, $\sup\left\{
  T^{**}(\tau)-\tau \,:\, \tau\ge 0
  \right\} = +\infty$. 
Hence, there exists a sequence $\{\tau_m\}\subset[0,\infty)$ of times for which $T^{**}(\tau_m) - \tau_m \to+\infty$ as $m\to\infty$. 
Therefore, there exists $m_1$ such that $T^{**}(\tau_m) - \tau_m>\bar T_1:=\bar T +1$   
for all $m>m_1$  and in particular, it holds $L^\infty(\tau_m)>0$ for all $m>m_1$. 
Here, we remind the reader that $\bar T$ was introduced in~Lemma~\ref{lem:rar-vanishes-for-large-t} in \eqref{def:bar-lambda_bar-T}.    

Let $\delta>0$. We claim that there exists $m_0>m_1$, and  $t_0>\tau_{m_0}$ so that 
\begin{equation}\label{claimTbarintervalnew}
    L^\infty(t_0) - L^\infty(t_0+\bar T) \le \frac \delta2\,L^\infty(\tau_{m_0})\,,\qquad t_0+\bar T < T^{**}(\tau_{m_0})
\end{equation}
and such that $L^\infty$ is continuous at $t_0$ and at $t_0+\bar T$.
Loosely speaking, estimate~\eqref{claimTbarintervalnew} shows that there exists an interval of length $\bar T$, with
$$
[t_0,t_0+\bar T]\subset \left(\tau_{m_0}, T^{**}(\tau_{m_0})\right)
$$
where $L^\infty$ is approximately constant, relatively to $L^\infty(\tau_{m_0})$. Indeed, notice that property~\eqref{eq 5 T** t ineq}$_1$ implies
\begin{equation}\label{eq:ge-ge-g}
     L^\infty(t_0)\ge  L^\infty (t_0 + \bar T)
\ge  L^\infty \left(T^{**}(\tau_{m_0})-\right) 
     >\frac 12 L^\infty(\tau_{m_0})
\end{equation}
and hence $L^\infty(t_0)/ L^\infty(\tau_{m_0})$ is uniformly positive.
After \eqref{claimTbarintervalnew} is proved, the contradiction will be reached by proving that $L^\infty$, or equivalently $L^\nu$ for large $\nu$, cannot stay approximately  constant for an interval of length $\bar T$, due to wave decay.

To prove the claim \eqref{claimTbarintervalnew}, we first notice that \eqref{eq 5 T** t ineq}$_1$ immediately implies
$$
L^\infty(\tau_m)-L^\infty(T^{**}(\tau_m)-)<\frac 12 L^\infty(\tau_m)\qquad \forall\,\, m>m_1\;.
$$
On the other hand, let $k=k(m)$ be the integer part of $(T^{**}(\tau_m)-\tau_m)/\bar T_1$ and notice that $k(m)\ge 1$ for $m>m_1$. Then one has
\begin{align*}
L^\infty(\tau_m)-L^\infty(T^{**}(\tau_m)-)
&\ge 
\sum_{i=1}^{k}
\left[
L^{\infty}(\tau_m+(i-1)\bar T_1)-L^{\infty}((\tau_m + i\bar T_1)-)
\right]\\
&\qquad+ \left[L^{\infty}(\tau_m+k\bar T_1)-L^{\infty}(T^{**}(\tau_m)-) \right]\\
&\ge 
\sum_{i=1}^{k}
\left[
L^{\infty}(\tau_m+(i-1)\bar T_1)-L^{\infty}((\tau_m + i\bar T_1)-)
\right]\,.
\end{align*}
We note that $k$ depends on $m$ and actually, it tends to infinity as $m\to\infty$. This allows us to choose $m$ large enough such that the number $k$ of subinterval of length $\bar T_1$ is sufficiently large. 
More precisely, there exists $m_0$ larger than $m_1$ such that $\delta\cdot k_0 \ge  1$, where $k_0:= k(m_0)$.  
Now, for $m=m_0$ assume that $L^{\infty}(\tau_{m_0}+(i-1)\bar T_1)-L^{\infty}((\tau_{m_0} + i\bar T_1)-)>\frac\delta2 { L^\infty(\tau_{m_0})}$ for every $i=1,\dots k_0$, then 
$$
\frac\delta2 k_0 { L^\infty(\tau_{m_0})}< \sum_{i=1}^{k_0}
\left[
L^\infty(\tau_{m_0}+(i-1)\bar T_1)-L^\infty((\tau_{m_0} + i\bar T_1)-)
\right]<\frac 12 { L^\infty(\tau_{m_0})}
$$ 
and this leads to a contradiction according to the choice of $k_0$. Therefore, there exists $j\in\{1,\dots,k_0\}$ such that 
\begin{equation}\label{S5-estimate-Linfty}
L^\infty(\tau_{m_0}+(j-1)\bar T_1)-L^\infty(\tau_{m_0}+j\bar T_1 -)\le \frac\delta2 { L^\infty(\tau_{m_0})}\,.
\end{equation}
Finally, since $\bar T_1> \bar T$, we can choose $t_0$ such that
$$
\tau_{m_0}+(j-1)\bar T_1 \le t_0 < t_0 + \bar T<  \tau_{m_0}+j\bar T_1
$$
so that $L^\infty$ is continuous at both $t_0$ and $t_0+\bar T$. Again by the monotonicity of $L^\infty$ one has that
\begin{equation*}
L^\infty(t_0)-L^\infty(t_0+\bar T)\le 
L^\infty(\tau_{m_0}+(j-1)\bar T_1)-L^\infty(\tau_{m_0}+j\bar T_1 -)
\end{equation*}
and hence, by means of \eqref{S5-estimate-Linfty}, we conclude that claim \eqref{claimTbarintervalnew} holds\,.

Next, having estimate~\eqref{claimTbarintervalnew}, we choose $\nu^*$ large enough such that 
$$
|L^\nu(t_0)-L^\infty(t_0)|+|L^\nu(t_0+\bar T)-L^\infty(t_0+\bar T)|<\frac\delta2 { L^\infty(\tau_{m_0})}
$$
for all $\nu\ge\nu^*$, since $L^\nu(t)\to L^\infty(t)$ as $\nu\to\infty$. Hence, 
\begin{equation}\label{claimTbarintervalnew2}
    L^\nu(t_0) - L^\nu(t_0+\bar T) \le \delta { L^\infty(\tau_{m_0})}\,\qquad \forall\,\nu\ge\nu^*\;.
\end{equation}
Let now $\nu_1^*\ge\nu^*$ such that $\MM\DT_\nu<1$ for all $\nu\ge\nu_1^*$. By~\eqref{hyp:Rtto0L} in Lemma~\ref{lem:rar-vanishes-for-large-t} and~\eqref{claimTbarintervalnew2}, we obtain that 
\begin{equation}\label{claimTbarintervalnew22}
R^\nu(t) \le  C e^{\MM\bar T}\, \delta { L^\infty(\tau_{m_0})}
\end{equation}
for all $t\in(t_0, t_0+\bar T)$ and $\nu\ge\nu^*_1$.
Moreover, we recall 
that $L^\infty (t_0 + \bar T) >\frac 12 L^\infty(\tau_{m_0})$ from \eqref{eq:ge-ge-g} and hence, we can find that 
\begin{align*}
    L^\nu (t_0 + \bar T) &\ge \frac 12L^\infty(\tau_{m_0}) - |L^\nu(t_0+\bar T)-L^\infty(t_0+\bar T)|\\
&    \ge \frac 12 (1-\delta)L^\infty(\tau_{m_0}) > \frac 14 L^\infty(\tau_{m_0}) \qquad\forall \,\, \nu\ge\nu_1^*\,,
\end{align*}
as long as $0<\delta< \frac12$. Therefore, by estimate~\eqref{claimTbarintervalnew22}, we have
\begin{align*}
S^\nu(t)& = L^\nu(t)-R^\nu(t)\\
&> \frac 14 L^\infty(\tau_{m_0}) - C e^{\MM\bar T}\,{ \delta}{ L^\infty(\tau_{m_0})}> \frac 15 L^\infty(\tau_{m_0})>0
\end{align*}
for $t\in(t_0,t_0+\bar T)$, $\nu\ge\nu_1^*$ and $\delta>0$ sufficiently small.
This immediately implies that we have the following lower bound 
\begin{equation}\label{eq:Snutimestepsadd2new}
    \sum_{t_0 <  t^n\le t_0+\bar T} M\DT S^\nu(t^n-)\ge \frac 15 L^\infty(\tau_{m_0}) \MM \bar T
\end{equation}
true for all $\nu\ge\nu_1^*$ noting that the lower bound is independent of $\nu$.

Now, lets investigate the variation of $R^\nu(t)$ in the interval $t\in(t_0,t_0+\bar T)$ in a similar argument that was used before. First, we write the expansion
\begin{equation*}
    R^\nu(t)=R^\nu(t_0)+\sum_{t_0<\tau<t} \left([\Delta R^\nu(\tau)]_+-[\Delta R^\nu(\tau)]_-\right)
\end{equation*}
where $[f]_+$ and $[f]_-$ denote the positive and negative part of $f$, respectively. 
Thanks to \eqref{claimTbarintervalnew22}, we find that 
\begin{align}\nonumber
\sum_{t_0<\tau<t} [\Delta R^\nu(\tau)]_+ &= R^\nu(t) - R^\nu(t_0) + \sum_{t_0<\tau<t} [\Delta R^\nu(\tau)]_- \\
&\le C e^{M\bar T}\, \delta L^\infty(\tau_{m_0}) + \sum_{t_0<\tau<t} [\Delta R^\nu(\tau)]_-\,, \label{eq:R-nu-t1barT22new}
\end{align}
for $\nu>\nu_1^*$  and $t\in(t_0,t_0+\bar T)$. To bound the negative variation of $R^\nu$, we list the three possible cases for which $[\Delta R^\nu(\tau)]_-$ is nonzero and these are:

\smallskip
(i) when there is a shock-rarefaction interaction of the same family resulting to a shock or a rarefaction of the same family. In view of the analysis of the term $(IV)$ in Lemma~\ref{lem:rar-vanishes-for-large-t}, it holds $[\Delta R^\nu(\tau)]_-\le C\,|\Delta L^\nu(\tau)|$ in this case, where $C$ is given at~\eqref{hyp:RK}; 

\smallskip
(ii) when a rarefaction front reaches the boundary $y=0$ or $y=\MM$. In this case, we have again $[\Delta R^\nu(\tau)]_-\le \,|\Delta L^\nu(\tau)|$ as mentioned in term $(II)$;

\smallskip
(iii) when a rarefaction front $\eps^-$ meets a time step at $\tau=t^n$. In this case, we have $[\Delta R^\nu(\tau)]_-\le \frac 1 2 \MM \DT |\eps^-|$. One can verify this from the analysis in term $(V)$ in Lemma~\ref{lem:rar-vanishes-for-large-t}. Summing over all time steps and using~\eqref{claimTbarintervalnew22}, we reach
$$
\sum_{
\genfrac{}{}{0pt}{}{t_0<t^n<t} {\eps^-\in\mathcal{R}}
} [\Delta R^\nu(t^n)]_-\le
\frac \MM 2  \DT \sum_{
\genfrac{}{}{0pt}{}{t_0<t^n<t} {}
}
|R^\nu(t^n-)| \, \le\, \frac \MM 2  \bar T C e^{\MM \bar T} \delta  L^\infty(\tau_{m_0})
$$
for all $t\in(t_0,t_0+\bar T)$ and $\nu\ge\nu^*_1$. 

In view now of the above three cases and~\eqref{claimTbarintervalnew2}, we have 
$$
\sum_{t_0<\tau<t} [\Delta R^\nu(\tau)]_- \le C \left( \frac \MM 2  \bar T  e^{\MM \bar T}+1\right) \delta  L^\infty(\tau_{m_0})
$$
for all $t\in(t_0,t_0+\bar T)$ and $\nu\ge\nu^*_1$. 
By means of the above, going back to \eqref{eq:R-nu-t1barT22new}, we infer that the sum of the positive part is also small of order $\delta$, i.e.
\begin{equation}\label{eq:DRpos2new}
\sum_{t_0<\tau<t} [\Delta R^\nu(\tau)]_+\le  C \left( \frac \MM 2  \bar T  e^{\MM \bar T}+e^{\MM \bar T}+1\right)\delta   L^\infty(\tau_{m_0})
\end{equation}
for all $t\in(t_0,t_0+\bar T)$ and $\nu\ge\nu^*_1$. 

In turn, we investigate the  cases  involved in the positive variation of for $R^\nu$, which are two:

(i) when there is a shock-shock interaction of the same family and hence, a reflected rarefaction arises after the interaction. In this case $[\Delta R^\nu(\tau)]_+\ge 0$;

(ii) when a shock front $\eps^-$ meets a time step at time $\tau=t^n$  and a rarefaction front $\eps_{refl}$ of the other family arises after the update. From \cite[Proposition 3.2]{AC_2021}, it holds $[\Delta R^\nu(t^n)]_+=|\eps_{refl}|\ge \MM\DT c_1(q) |\eps^-|$, where $c_1(q)= (1+\cosh(q))^{-1}$.

Taking into account only case (ii) that involves the time steps, we get
$$
\sum_{t_0<\tau<t} [\Delta R^\nu(\tau)]_+\ge \sum_{
\genfrac{}{}{0pt}{}{t_0<t^n<t} {\eps^-\in\mathcal{S}}
} [\Delta R^\nu(t^n)]_+\ge  c_1(q) \sum_{t_0<t^n<t}  \MM\DT S^\nu(t^n-)\,.
$$
Combining with~\eqref{eq:Snutimestepsadd2new} for $t\in(t_0,t_0+\bar T)$ and $\nu\ge\nu^*_1$, we arrive at
$$
\sum_{t_0<\tau<t} [\Delta R^\nu(\tau)]_+\ge c_1(q)\frac 15 L^\infty(\tau_{m_0}) \MM \bar T>0\,,
$$
which contradicts~\eqref{eq:DRpos2new} since $\delta$ can be chosen arbitrarily small. Thus, $T^*$ is finite and the proof is complete.
\end{proof}

\subsection{Proof of Theorem~\ref{Th-2-unconditional}}\label{Subsec:5.1}
Here we conclude the proof of Theorem~\ref{Th-2-unconditional}, based on the estimates proved in the previous sections. 

First, we return to the notation of Step 1 of Section~\ref{S3} and recall that $(\tilde\rho(x',t'), \tilde\mm(x',t'))$ is the solution to~\eqref{eq:system_Eulerian_M-M1}, or equivalently, ~\eqref{eq:system_Eulerian_M-M1-K=1} with kernel $1$, while $(\rho, \mm)$ is the one to~\eqref{eq:system_Eulerian_M-M1-K} with constant kernel $K>0$.
Then, from Step $2$ of Section~\ref{S3}, we have the existence of an entropy weak solution $(\tilde\rho, \tilde\mm)$ with concentration along the free boundaries $\tilde a(t')$ and $\tilde b(t')$  on the $(x',t')$ plane. The solution $(\tilde\rho, \tilde\mm)$ conserves mass and momentum that are denoted by $\widetilde\MM$ and $\widetilde\MM_1$, respectively, and there exists a positive constant $\rho_{inf}>0$ such that the density is uniformly positive within its support, i.e. $\essinf_{x'\in \tilde I(t')} \tilde\rho(\cdot,t')\ge \rho_{inf}$. Moreover, the solution satisfies the results on the decay of waves obtained in Sections~\ref{S2.1} and~\ref{S5}. 

From 
Lemma~\ref{S5Prop 5.1} and by iteration, estimate~\eqref{T*star} would lead to
\begin{equation}\nonumber 
    L^\infty(t') \le L^\infty(k\, T^* )
    \le \frac 1{2^k}\,L^\infty(0),\qquad \forall~ t' \in [k T^*, (k+1) T^*) 
\end{equation} 
and hence 
\begin{equation}\nonumber
    L^\infty(t')\le 2  \, {2^{-\frac { t'}  {T^*}}}\,\,L^\infty(0) = C_1 {e}^{-C_2 t'}
\end{equation} 
for appropriate constants $C_1$, $C_2>0$ independent of $t'$. Using similar argument now as in Proposition~\ref{prop:unif-conv-of-L}, we can prove that $L^\nu(t')$ converges to 0 exponentially fast as well in time and also uniformly in $\nu$ and Lemma~\ref{lem:3.2} is proven.

As in \cite[(5.31),(5.32)]{AC_2021}, it follows that the total variation of the approximate sequence decays to zero exponentially fast as time tends to infinity. Therefore, passing to the limit as $\nu\to\infty$ and using the liminf property of the total variation, we get the following exponential bounds:
\begin{align*}
\tv\left\{\tilde \rho(t');\,(\tilde a(t'),\tilde b(t'))\right\} \,,\quad
\tv\left\{\tilde \vv(t');\,(\tilde a(t'),\tilde b(t'))\right\}  
\le C_2'e^{- C_1'  t'},\,\qquad \forall\, t' > 0 
\end{align*}
and also, following \cite[Section 5.3]{AC_2021}, there exists a constant $\rho_\infty>0$ such that
\begin{align*}
\esssup_{x\in (\tilde a(t'), \tilde b(t'))} |\tilde\rho(x,t)- \rho_\infty|
\le C_2'e^{- C_1'  t'},\,\qquad \forall\, t'> 0
\end{align*}
as well as
\begin{align*}
\esssup_{x\in (\tilde a(t'), \tilde b(t'))} |\tilde\vv(x,t)|
\le C_2'e^{- C_1'  t'},\,\qquad \forall\, t'> 0
\end{align*}
for appropriate constants $C_1'$, $C_2'>0$ independent of $t'$. Here we recall that we set 
$\MM_1=\bar \vv =0$ in Step~1, Section~\ref{S3}.

In addition, the free boundaries satisfy
$$
\lim_{t'\to\infty} \tilde a(t')=\tilde a_\infty\doteq 
\tilde a_0+\int_0^\infty \tilde v(0+,s')\,ds'  \,,
\qquad
\lim_{t'\to\infty} \tilde b(t')=\tilde a_\infty+ \frac{\widetilde \MM}{\rho_\infty} \;.
$$
Thus, $(\tilde\rho,\tilde\mm)$ admits time-asymptotic flocking exponentially fast.

At last, according to Step 1 in Section~\ref{S3}, we translate the variables $x'$ and $\tilde\vv(x',t')$ and then  rescale the system backwards using $\Phi_\lambda^{-1}$, with $\lambda=\sqrt{K}$, to go back to the original $(x,t)$ variables with $\MM_1$ and $\bar \vv$ provided by \eqref{cons-of-momentum} and \eqref{def:vbar} respectively. In this way, we obtain the BV function $(\rho(x,t),\mm(x,t))$ that is now the entropy weak solution to~\eqref{eq:system_Eulerian_M-M1-K} with concentration along the locally Lipschitz curves 
$$ 
a(t)=\frac{1}{\lambda } \left(\tilde a(t') + \bar \vv t'\right) = \frac{1}{\lambda } \tilde a(\lambda t) + \bar \vv t,\qquad 
b(t) = \frac{1}{\lambda } \tilde b(\lambda t) + \bar \vv t\,,\qquad t'=\lambda t
$$ 
and satisfies $\essinf_{x\in  I(t)} \rho(\cdot,t)\ge \rho_{inf}$. From above,
$$
a(t) - \bar\vv t\longrightarrow a_\infty\doteq \frac{a_\infty}{\lambda} =a_0+\int_0^\infty\vv(a(s)+,s) ds \,,\qquad b(t)- \bar\vv t\longrightarrow  a_\infty+ \frac{ \MM}{\rho_\infty}\;,
$$ 
as $t\to+\infty$. Since $(\rho(x,t),\mm(x,t))$  conserves mass $\MM=\frac{1}{\lambda} \tilde \MM$ and momentum $\MM_1=\frac{1}{\lambda} \tilde \MM_1$, it immediately satisfies the integral identities~\eqref{S1:rho-eq-phi2},~\eqref{S1:m-eq-phi2-pressure} and~\eqref{entropy-cond_rho-m} with $M_1(t)=\MM_1$. Now the support $I(t)=[a(t),b(t)]$ of $(\rho,\mm)$ is uniformly bounded
$$
b(t)-a(t)=\frac{1}{\lambda}\left( \tilde b(t)-\tilde a(t)\right)\le \frac{1} {\lambda \rho_{inf} }\tilde \MM=
\frac{1} { \rho_{inf}} \MM
$$
for all $t>0$, and 
\begin{equation*}
    \esssup_{x\in (a(t), b(t))} |\rho(x,t)- \rho_\infty|\,,\qquad \esssup_{x\in (a(t), b(t))} |\vv(x,t) - \bar \vv|
\le C_2'e^{- C_1'  t},\,\qquad \forall\, t> 0\,.
\end{equation*}

The proof of the theorem is now complete.
\qed

\end{document}